\documentclass[11pt]{article}

\usepackage{graphicx}
\usepackage{amssymb}
\usepackage{amsfonts}
\usepackage{amsmath}
\usepackage{hyperref}
\usepackage{tikz}
\usepackage{graphicx,epstopdf}

\usetikzlibrary{arrows}

\setcounter{MaxMatrixCols}{10}

\newtheorem{theorem}{Theorem}[section]
\newtheorem{proposition}[theorem]{Proposition}
\newtheorem{lemma}[theorem]{Lemma}

\newtheorem{definition}[theorem]{Definition}
\newtheorem{example}[theorem]{Example}

\newenvironment{proof}[1][Proof]{\textbf{#1.} }{\ \rule{0.5em}{0.5em}}

\begin{document}

\author{ Yuri Muranov \\
University  of \\ Warmia and Mazury\\
 Olsztyn, Poland 
\and 
Anna Szczepkowska \\
University of \\ Warmia and Mazury\\
 Olsztyn, Poland
\and  
Vladimir Vershinin\\ 
Universit\'e de Montpellier\\  Montpellier, France
}
\title{Path  homology  of directed hypergraphs}
\maketitle

\begin{abstract}
We describe various  path homology theories  constructed for a directed hypergraph. 
We introduce  the category  of directed hypergraphs and the notion of a homotopy in this category. 
Also, we investigate the functoriality and the homotopy invariance of the introduced path homology groups. 
We provide examples of computation of these homology groups. 
\end{abstract}

{\bf Keywords:} \emph{path complex, directed hypergraph, 
path homology, digraph, homotopy of directed hypergraphs.}
\bigskip

 AMS Mathematics Subject Classification 2020: 18G90, 55N35, 05C20, 05C22, 
05C25,  05C65,    55U05.


\section{Introduction}\setcounter{equation}{0}\label{S1}

Directed hypergraphs are the generalization of digraphs and  have been widely  used  in  
discrete mathematics and computer science, see e.g. \cite{Sur}, \cite{Berge},  \cite{mor},  and \cite{Gallo}. 
In particular,  the  directed hypergraphs  give effective  tools  for the investigation of databases and 
structures on complicated  discrete  objects. 

Recently, the topological properties of digraphs, hypergraphs, multigraphs, and 
quivers have been studied using various (co)homologies theories, consult e.g. \cite{Embed}, 
\cite{Graham}, \cite{Betti}, \cite{Parks}, \cite{Mi3}, \cite{Forum}, \cite{Hyper}, \cite{Pcomplex}. 

 In this paper,  we  construct  several functorial and homotopy invariant homology theories on 
 the category of directed hypergraphs  using the  path homology theory introduced in \cite{Mi4},  \cite{Hyper},   \cite{Mi2}, \cite{Pcomplex}, and 
\cite{Forum}. 

 A rich structure of a directed hypergraph gives a number of opportunities  to define 
 functorially a path complex for   the category of hypergraphs which we construct in the paper. 
 We describe these  constructions in Section \ref{S3}.   We  introduce also a notion of a homotopy in the category of directed hypergraphs  and describe  functorial relations  between homotopy categories of directed hypergraphs,  digraphs,   and path complexes.  
  
The essential difference from the situation of the category of digraphs is 
the  existence of the notion of the density of the path complex that we introduce for the two  of the introduced path complexes of directed hypergraphs. This notion 
gives an opportunity to define a filtration on the corresponding path complex and hence a filtration on
its path homology groups.  We consider all homology groups with  coefficients in a unitary   commutative ring $R$.

In Section  \ref{S2},   we  define a category  of directed hypergraphs  and introduce the notion of homotopy in this category. 

In  Section \ref{S3}, we  construct several path homology theories on the category of directed  hypergraphs.

 \section{Path complexes and homotopy of directed hypergraphs}\label{S2}
\setcounter{equation}{0}

Let $\Pi=(V,P)$ be a path complex with $V=\{0,\dots, n\}$ and $J=\{0,1\}$ be a set, see \cite{Hyper}, \cite{Pcomplex}. 
Define a path complex $\Pi^{\prime}=(V^{\prime}, P^{\prime})$ where $V^{\prime}=\{0^{\prime}, \dots, n^{\prime}\}$ and  
$p^{\prime}=(i_0^{\prime}\dots i_n^{\prime})\in P^{\prime}$ iff $p=(i_0\dots i_n)\in P$. We  identify 
$V\times J=V\times\{0\}\amalg V\times\{1\}$ with  $V\amalg V^{\prime}$. Define  a path complex $\Pi^{\uparrow}=(V\times J, P^{\uparrow})$  where  
\begin{equation*}\label{2.1}
\begin{matrix}
P^{\uparrow}=P\cup P^{\prime}\cup P^{\#}, \\
{P^\#}=\{ q^\#_k=(i_0\dots i_ki_k^{\prime}i^{\prime}_{k+1}\dots i_n^{\prime})|q=(i_0\dots i_ki_{k+1}\dots i_n)\in P \}.\\
\end{matrix}
\end{equation*}
We have   morphisms 
$
i_{\bullet}\colon \Pi \to \Pi^{\uparrow}
$
and $j_{\bullet}\colon \Pi \to \Pi^{\uparrow}$
that are  induced by the natural inclusion $V$ onto $V\times \{0\}$ and  onto   $V\times\{1\}$, respectively.
 
\begin{definition} \label{d2.1}\rm   (i) A  \emph{ hypergraph} is a pair $G=(V,E)$  consisting of  a non-empty   
	set  $V$ and  a  set  $E=\{\bold e_1,\dots, \bold e_n\}$  of    distinct and non-ordered subsets of $V$ such that 
	$
	\bigcup_{i=1}^n \bold  e_i=V
	$ and every $\mathbf e_i$ contains strictly more than one element. 
	The elements of $V$ are called \emph{vertices} and the elements of  $E$ are called \emph{edges}.

	(ii) A \emph{directed hypergraph} $G$  is a pair $(V,E)$ consisting of a  set $V$   and a   set 
	$E=\{\bold e_1,\dots , \bold e_n\}$   where $\bold e_i\in E$    is an ordered pair $(A_i,B_i)$  of disjoint non-empty subsets  of the set $V$  such that 
	$V=\bigcup_{\bold e_i\in  E} (A_i\cup B_i)$. The elements of $V$ are called \emph{vertices} and   the elements of $E$  are called \emph{arrows}.  
	The set $A=\mbox{orig }(A\rightarrow B)$ is called 
	the \emph{origin of the arrow} and the set  $B=\mbox{end}(A\rightarrow B)$
	is called the \emph{end of the arrow}. The elements of $A$ are called 
	the \emph{initial vertices} of $A\to B$ and  the elements of $B$ are called its \emph{terminal vertices}.  
\end{definition}

For a finite set  $X$  let ${\mathbf{P}}(X)$ denote as usual the power set. 
We define  a set 
$
\mathbb P(X)\colon =  \{{\mathbf{P}}(X)\setminus \emptyset\}\times  
\{{\mathbf{P}}(X)\setminus \emptyset\}
$
consisting  of ordered pairs of non-empty subsets of $X$.   
Every  map of finite sets $f\colon V\to W$ induces a map $\mathbb P(f)\colon\mathbb P(V)\to \mathbb P(W)$. 
For a directed hypergraph $G=(V,E)$,   by Definition \ref{d2.1}, we have the natural map  $\varphi_G\colon E\to \mathbb P(V)$ defined by  $\varphi_G(A\to B)\colon =(A,B)$. 

\begin{definition}\label{d2.2} \rm   Let $G=(V_G,E_G)$  and
$H=(V_H,E_H)$  be two  directed hypergraphs. A  \emph{morphism} $f\colon G\to H$  is given by a pair of maps  $f_V\colon V_G\to V_H$ 
and $f_E\colon E_G\to E_H$ 
 such that  the following diagram 
$$
\begin{matrix}
E_G&\overset{\varphi_G}\longrightarrow &\mathbb P(V_G)\\
\ \ \downarrow f_E&&\ \  \downarrow \mathbb P(f_V)\\
E_H&\overset{\varphi_H}\longrightarrow &\mathbb P(V_H)\\
\end{matrix}
$$
is commutative. 
\end{definition}

Let us denote by  $\mathcal{DH}$  the category whose objects are directed  hypergraphs and whose morphisms are morphisms of  directed hypergraphs. 

 For a directed hypergraph $G=(V_G,E_G)$, we  can  consider 
subsets 
${\mathbf{P}}_0(G)\subset  {\mathbf{P}}(V_G)\setminus \emptyset$,  
${\mathbf{P}}_1(G)\subset  {\mathbf{P}}(V_G)\setminus \emptyset$ and 
${\mathbf{P}}_{01}(G)={\mathbf{P}}_{0}(G)\cup {\mathbf{P}}_{1}(G)$
by setting
\begin{equation*}\label{2.1}
\begin{matrix}
{\mathbf{P}}_0(G)=\{A\in  {\mathbf{P}}(V_G)\setminus \emptyset| \exists 
B\in  {\mathbf{P}}(V_G)\setminus \emptyset:  \ A\to B\in E_G\}, \\
{\mathbf{P}}_1(G)=\{B\in  {\mathbf{P}}(V_G)\setminus \emptyset| \exists 
A\in  {\mathbf{P}}(V_G)\setminus \emptyset: \ A\to B\in E_G\}. \\
\end{matrix}
\end{equation*} 

\begin{definition}\label{d2.3}\rm  Let $G=(V_G,E_G)$ and $H=(V_H, E_H)$ be  directed hypergraphs. 
We define the \emph{box product} $G\Box H$ as a directed hypergraph 
with the set of vertices $V_{G\Box H}=V_{G}\times V_{H}$ and the set of  arrows  
$E_{G\Box H}$ consisting of the union of  arrows 
$
\{A\times C\to  B\times C\}
$
with $(A\to B)\in E_G$,   $C\in {\mathbf{P}}_{01}(H)$ and 
$ \{A\times C\to  A\times D\}$
with 
$ (C\to D)\in E_H$,  $A\in 
{\mathbf{P}}_{01}(G)$.
\end{definition}
 Every connected digraph $H=(V_H, E_H)$ can be considered as a  directed  hypergraph 
 with the same set of vertices and  of  a set of arrows of the form $\{v\}\to \{w\}$  whith  $(v\to w)\in E_H$.  
 Hence,  Definition \ref{d2.3} gives naturally 
a box product   $G\Box H$  of a directed hypergraph $G$ and a connected digraph $H$.  
Note that a line digraph $I_n$ defined for example in \cite[Sec. 3.1]{MiHomotopy} is connected and  that we have two digraphs $I_1$, namely $0\to 1$ and $1\to 0$.

\begin{definition}\label{d2.4} \rm i) Two  morphisms  $f_0, f_1\colon G\rightarrow H$ of directed hypergraphs  
	are called \emph{one-step homotopic} if there exists a line digraph $I_1$  and  a morphism  
	$F\colon G\Box I_1\rightarrow H$,  such that 
\begin{equation*}
F|_{G\Box \{0\}}=f_0\colon G\Box \{0\}\rightarrow H,\ \ F|_{G\Box
\{1\}}=f_1\colon G\Box \{1\}\rightarrow H.
\end{equation*}
If the appropriate morphism $F$ called a one-step homotopy exists, we write  $f_0\simeq_1 f_1$.

ii) Two  morphisms  $ f,g\colon G \to  H$  of directed hypergraphs are called  \emph{homotopic},  which we denote   $ f\simeq  g$ if there exists a sequence 
of morphisms 
$
f_i\colon G\to H$ for   $i=0,\dots,  n$  such that 
 $ f=f_0\simeq_1  f_1\simeq_1 \dots \simeq_1  f_n =g$.

iii) Two  directed hypergraphs   $G$ and $H$ are \emph{homotopy equivalent} if there
exist  morphisms 
$
f\colon G\to H$ and $g\colon H\to G$
such that 
$ fg \simeq  { \operatorname{Id}_{H}}$ and 
$ gf\simeq {\operatorname{Id}_G}
$. 
  In such a case,  we write $G\simeq H$ and  call the  morphisms  $f$, $g$ \emph{homotopy inverses } of each other.
\end{definition}

\begin{proposition}\label{p2.5} Two  morphisms  $ f,g\colon G \to  H$  of directed hypergraphs 
	are homotopic if and only if there is a line digraph $I_n$ with $n\geq 0$ and a morphism  
	$F\colon G\Box I_n\rightarrow H$   such that 
$
F|_{G\Box \{0\}}=f_0\colon G\Box \{0\}\rightarrow H,\ \ F|_{G\Box
\{n\}}=g\colon G\Box \{n\}\rightarrow H. \ \  \ 
$  $\blacksquare$
\end{proposition}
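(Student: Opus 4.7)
The plan is to view the line digraph $I_n$ as the concatenation of $n$ oriented unit segments, each isomorphic to an $I_1$ with one of the two possible orientations, and then to translate between a single morphism out of $G \Box I_n$ and a chain of one-step homotopies.

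For the $(\Leftarrow)$ direction, given $F\colon G\Box I_n\to H$, I would let $I_1^{(k)}$ denote the sub-digraph of $I_n$ on the vertex pair $\{k,k+1\}$ with its inherited orientation, for $k=0,\ldots,n-1$. The natural inclusion $V_G\times\{k,k+1\}\hookrightarrow V_G\times\{0,\ldots,n\}$ extends, via Definition \ref{d2.3}, to a morphism $\iota_k\colon G\Box I_1^{(k)}\to G\Box I_n$ of directed hypergraphs, because both classes of arrows in $G\Box I_1^{(k)}$ appear as arrows of $G\Box I_n$. Then $F_k:=F\circ\iota_k$ provides a one-step homotopy between $f_k:=F|_{G\Box\{k\}}$ and $f_{k+1}:=F|_{G\Box\{k+1\}}$, producing the required chain $f=f_0\simeq_1 f_1\simeq_1\cdots\simeq_1 f_n=g$ of Definition \ref{d2.4}(ii).

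For the $(\Rightarrow)$ direction, I would realize a given homotopy chain by one-step homotopies $F_k\colon G\Box I_1^{(k)}\to H$, and let $I_n$ be the line digraph obtained by gluing these $I_1^{(k)}$'s end-to-end, identifying the ``$1$-vertex'' of $I_1^{(k)}$ with the ``$0$-vertex'' of $I_1^{(k+1)}$. I define $F$ on vertices by $F_V(v,k):=(f_k)_V(v)$. For arrows I rely on the following structural observation: since $I_n$ viewed as a directed hypergraph has every element of $\mathbf{P}_{01}(I_n)$ equal to a singleton $\{k\}$, each arrow of $G\Box I_n$ from Definition \ref{d2.3} is either horizontal of type $A\times\{k\}\to B\times\{k\}$ with $(A\to B)\in E_G$, or vertical of type $A\times\{k\}\to A\times\{k\pm 1\}$; in either case it lies in the image of some $\iota_k$, and I set $F_E$ on the arrow to equal the value assigned by the corresponding $(F_k)_E$.

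The main obstacle is verifying that this piecewise definition of $F_E$ is well posed: a horizontal arrow at an interior level $k$ sits in both $G\Box I_1^{(k-1)}$ and $G\Box I_1^{(k)}$, so one must show that $(F_{k-1})_E$ and $(F_k)_E$ coincide on it. This reduces to the fact that both $F_{k-1}$ and $F_k$ restrict on $G\Box\{k\}$ to the common morphism $f_k$, by the very definition of one-step homotopy. The commutativity of the diagram in Definition \ref{d2.2} for the assembled $F$ then follows by pasting together the corresponding commutativities for each $F_k$. I expect no deeper difficulty here, the real content of the proposition being the observation that the box product with $I_n$ decomposes naturally into overlapping box products with the segments $I_1^{(k)}$.
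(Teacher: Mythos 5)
The paper offers no written proof of Proposition~\ref{p2.5} (it is stated with $\blacksquare$ as evident), and your argument correctly supplies the intended one: decompose $I_n$ into the unit segments $I_1^{(k)}$, restrict along the inclusions $G\Box I_1^{(k)}\to G\Box I_n$ for one implication, and glue the one-step homotopies for the other, with well-definedness on the overlapping horizontal arrows guaranteed by the common restriction $f_k$. This is correct and is essentially the standard argument the authors have in mind, mirroring the digraph case in \cite{MiHomotopy}.
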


The relation "to be homotopic"  is  an equivalence relation on the set of morphisms between  two directed  
hypergraphs, and homotopy equivalence  is an equivalence relation on the set of directed hypergraphs. 
Thus, we can consider  a category 
${h} \mathcal{ DH}$ whose objects are directed hypergraphs and morphisms are the classes of homotopic morphisms. 
We shall call the category ${h} \mathcal{DH}$  by 
\emph{homotopy category of directed hypergraphs}.

\section{Path homology of directed hypergraphs}\setcounter{equation}{0}\label{S3}

\subsection{k-connective  path homology}\label{S31}

 For a  directed hypergraph   $G= (V, E)$ and $c=1,2,3, \dots $
define a path complex  \cite[S3.1]{Pcomplex}  $\mathfrak C^c(G)=(V^c, P_G^c)$   where  
$V^c=V$ and  a path $(i_0\dots i_n)\in P_{V}$ lies in $P_G^c$  iff 
for any pair of consequent vertices $(i_{k}, i_{k+1})$  of the path, we have $i_{k}=i_{k+1}$ or
there are at least $c$  different edges $\bold e_1=(A_1\to B_1), \dots , \bold e_c=(A_c\to B_c)$ such that 
the vertex $i_k$ is the initial vertex and the vertex $i_{k+1}$ is the terminal vertex of  every edge $\bold e_i$. 
The number  $c$ is called the \emph{density} of the path complex $\mathfrak C^c(G)$. It is clear that we have 
a filtration 
\begin{equation}\label{3.1}
\mathfrak C(G)= \mathfrak C^{1}(G)\supset \mathfrak C^{2}(G)\supset \mathfrak C^{3}(G)\supset \dots
\end{equation}

\begin{proposition}\label{p3.1}
 For every morphism of directed hypergraphs $f\colon G\to H$  define a  morphism  
$$
\mathfrak C(f)=(f_V^1, f_p^1)\colon \mathfrak C(G)\to \mathfrak C(H)
$$ 
of path complexes putting $f_V^1\colon =f_V$ and    
$f_p^1\colon=f_p|_{P^1_G}  \colon P^1_G\to  P^1_H$ where     $f_p$ is defined 
by  $f_p(i_0\dots i_n)= \left(f(i_0)\dots f(i_n)\right)$. Then 
 we have the functor $\mathfrak C$  from the category $\mathcal{DH}$ of directed hypergraphs to the category  $\mathcal P$ of path complexes.
 \quad $ \blacksquare$
\end{proposition}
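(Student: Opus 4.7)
The plan is to verify two things separately: first, that for every morphism $f\colon G\to H$ in $\mathcal{DH}$, the pair $\mathfrak{C}(f)=(f_V,f_p|_{P_G^1})$ is a well-defined morphism of path complexes, and second, that the assignment $f\mapsto \mathfrak{C}(f)$ is functorial in the sense of respecting identities and composition.

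For the first step, the vertex-set requirement is trivial since $V^1=V_G$, so what matters is that $f_p$ sends $P_G^1$ into $P_H^1$. I will fix a path $p=(i_0\dots i_n)\in P_G^1$ and argue pairwise on consecutive vertices. For each index $k$, the definition of $P_G^1$ gives two cases: either $i_k=i_{k+1}$, in which case $f_V(i_k)=f_V(i_{k+1})$ and the density-$1$ condition is preserved trivially, or there exists an arrow $(A\to B)\in E_G$ with $i_k\in A$ and $i_{k+1}\in B$. In the latter case I invoke the commutativity of the square in Definition \ref{d2.2}, which says
\[
\varphi_H(f_E(A\to B))=\mathbb{P}(f_V)(\varphi_G(A\to B))=(f_V(A),f_V(B)).
\]
Hence $f_E(A\to B)$ is an arrow $(A'\to B')\in E_H$ whose origin $A'=f_V(A)$ contains $f_V(i_k)$ and whose end $B'=f_V(B)$ contains $f_V(i_{k+1})$. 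Thus the image path $(f_V(i_0)\dots f_V(i_n))$ satisfies the density-$1$ condition at every consecutive pair and so lies in $P_H^1$.

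For the functoriality step both checks are essentially formal. For the identity $\operatorname{Id}_G$, the induced map on paths is the coordinatewise application of $\operatorname{Id}_{V_G}$, which is the identity on $P_G^1$. For a composition $G\xrightarrow{f}H\xrightarrow{g}K$, both $\mathfrak{C}(gf)$ and $\mathfrak{C}(g)\circ\mathfrak{C}(f)$ act on vertices by $g_V\circ f_V$ and on any path by the coordinatewise application of $g_V\circ f_V$, so they coincide. I do not anticipate a genuine obstacle; the one point requiring care is unpacking the commutative square in Definition \ref{d2.2} at the level of individual vertices, which is exactly what translates the membership $i_k\in A$ into $f_V(i_k)\in f_V(A)=A'$. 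Once this is visible the rest is routine.
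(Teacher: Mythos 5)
Your proof is correct and is exactly the routine verification the paper leaves to the reader (the proposition is stated with no written proof): the commutative square of Definition \ref{d2.2} turns an arrow $(A\to B)$ witnessing the density-$1$ condition for $(i_k,i_{k+1})$ into an arrow $(f_V(A)\to f_V(B))$ witnessing it for $(f_V(i_k),f_V(i_{k+1}))$, and functoriality is formal. Your observation that the argument only needs the \emph{existence} of one witnessing arrow is also the right one, since it is precisely the possible collapse of several distinct arrows onto one under $f_E$ that destroys functoriality for $\mathfrak C^k$ with $k\geq 2$ (Example \ref{e3.2}).
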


 The functor   $\mathfrak C$ 
 provides the functorial  path homology theory on the category  $\mathcal{DH}$
of directed hypergraphs. For any  directed hypergraph $G$ and $k\in \mathbb N$, we set
 $
 H_*^{\bold {c}( k)}(G)\colon = H_*(\mathfrak C^k(G))$  as \emph{regular path homology groups} 
of  path complex
$\mathfrak C^k(G)$, see  \cite[S2]{Hyper}. We denote $H_*^{\bold c}(G)\colon =H_n^{\bold {c}(1)}(G)$.  

We  call these homology groups  \emph{the connective path homology groups} 
and for $k\geq 2$ \emph{the $k$-connective path homology groups} of the
 directed  hypergraph $G$, respectively.  The connective path homology theory is
 functorial  by Proposition \ref{p3.1}.  However
the $k$-connective homology theory $H_n^{\bold{c}( k)}(G)$ is not functorial for 
$k\geq 2$ as it follows from Example \ref{e3.2} below.  For any directed hypergraph
 $G$  the filtration in
 (\ref{3.1}) induces homomorphisms 
\begin{equation*}
H_n^{\bold c}(G)=H_n^{\bold {c}( 1)}(G)\longleftarrow  H_n^{\bold{c}( 2)}G)\longleftarrow H_n^{\bold{c}( 3)}(G)\longleftarrow \dots \ \ .
\end{equation*}

 Let $\mathcal D$  be a category of  digraphs without loops 
\cite[S2]{MiHomotopy}. A category $\mathcal G$ of graphs  is defined similarly 
\cite[S6]{MiHomotopy}. 

Let $G=(V, E)$ be a directed hypergraph. Define a digraph 
$\mathfrak G(G)=\left(V^d_G, E^d_G\right)$ where $V_G^d=V$ and   an arrow $v\to w$ lies in $E^d_G$ iff 
there is a hyperedge  $(A\to B)\in E$ such that $v\in A, w\in B$. 

\begin{example}\label{e3.2} \rm   i) Let $G=(V,E)$ be a directed hypergraph  such that 
$V$ is the union $A\cup B$ of two  non-empty sets  with empty intersection and  the set $E$ consists 
of one element $\bold e=(A\to B)$. Then $\mathfrak G(G)$ is a  complete bipartite
 digraph with arrows  
from  vertices lying  in $A$ to vertices lying in $B$. 

ii) Let $G=(V_G, E_G)$ and $H=(V_H, E_H)$ be two directed hypergraphs with 
$V_G=\{1,2,3,4\}, E_G=\{\mathbf e_1=(\{1\}\to \{2,3\}), \mathbf e_2=(\{1\}\to \{2.4\})\}$, 
$V_H=\{a,b,c\}, E_H=\{\mathbf e_1^{\prime}=(\{a\}\to \{b,c\})\}$. The map $f_V\colon V_G\to V_H$,  
given by $f_V(1)=a, f_V(2)=b, f_V(3)=f_V(4)=c$,  induces a morphism $f$ of directed hypergraphs.  
However the map $f$ does not induce a morphism from $\mathfrak C^2(G)$ to $\mathfrak C^2(H)$. 
\end{example}

 For every morphism  $f=(f_V,f_E)\colon G\to H$ of directed hypergraphs, define a  map  
$
\mathfrak G(f)\colon V^d_G \to V^d_H \ \ \text{by}\  \mathfrak G(f)=f_V.
$
For any arrow  $(v\to w)\in E^d_G$,  we have  $(f_V(v)\to f_V(w))\in E^d_H$   and  the morphism  
$ \mathfrak G(f)$ of digraphs is well defined.   Thus we have  a functor 
$\mathfrak G$   from the category $\mathcal{DH}$ of directed hypergraphs to the category  $\mathcal D$ of digraphs.
\emph{Regular path homology of digraphs} was constructed in \cite{Axioms}, \cite{Mi3}. 
It is based on the natural functor  $\mathfrak D$  from the category $\mathcal D$ of 
digraphs to the category  $\mathcal P$ of path complexes.

\begin{theorem}\label{t3.3} For every directed hypergraph $G$ there is an
isomorphism 
$
H^{\mathbf c}_*(G)\cong H_*(\mathfrak{D}\circ\mathfrak{G}(G))
$
 of path homology groups.
\end{theorem}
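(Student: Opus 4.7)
The plan is to show that the path complexes $\mathfrak{C}^1(G)$ and $\mathfrak{D}(\mathfrak{G}(G))$ coincide on the nose as path complexes on the vertex set $V$, from which the isomorphism of regular path homology groups follows tautologically.

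First I would unfold the definitions. Both path complexes have vertex set $V$: by construction $\mathfrak{C}^1(G)=(V, P_G^1)$, and $\mathfrak{G}(G)=(V,E_G^d)$, so $\mathfrak{D}(\mathfrak{G}(G))$ also has vertex set $V$. Next I would compare the set of allowed paths. In $\mathfrak{C}^1(G)$, the path $(i_0\dots i_n)$ is allowed iff for every consecutive pair $(i_k,i_{k+1})$, either $i_k=i_{k+1}$ or there exists a hyperedge $(A\to B)\in E_G$ with $i_k\in A$ and $i_{k+1}\in B$. On the other hand, in the digraph $\mathfrak{G}(G)$, by its very definition, $v\to w$ is an arrow iff such a hyperedge exists. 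Under the functor $\mathfrak{D}$ from digraphs to path complexes of \cite{Axioms}, \cite{Mi3}, the allowed paths of $\mathfrak{D}(\mathfrak{G}(G))$ are precisely the sequences whose consecutive pairs are either equal or arrows of $\mathfrak{G}(G)$. Thus the two families of allowed paths coincide termwise.

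Since $\mathfrak{C}^1(G)=\mathfrak{D}(\mathfrak{G}(G))$ as path complexes, their regular chain complexes are identical, and hence so are their regular path homology groups; this yields the asserted isomorphism (in fact an equality)
$$H^{\mathbf c}_*(G)=H_*(\mathfrak{C}^1(G))=H_*(\mathfrak{D}\circ\mathfrak{G}(G)).$$

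The only point requiring genuine care is verifying that the conventions on $\mathfrak{D}$ match those on $\mathfrak{C}^c$: in particular, whether degenerate consecutive pairs $i_k=i_{k+1}$ are admitted, and whether one works with the regular or non-regular path complex of a digraph. Because both constructions are explicitly designed as \emph{regular} path complexes, this alignment is essentially forced, but I would spell out the relevant wording from \cite{Axioms}/\cite{Mi3} to confirm that no hidden discrepancy with the conventions of \cite{Hyper}, \cite{Pcomplex} arises. Once that bookkeeping is done, no further computation is needed.
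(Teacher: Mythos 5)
Your proposal is correct and is exactly the paper's argument: the authors' entire proof is the one-line observation that the path complexes $\mathfrak{C}(G)$ and $\mathfrak{D}\circ\mathfrak{G}(G)$ coincide, which you verify by unfolding the definitions and matching the allowed paths termwise. Your extra care about the convention on degenerate consecutive pairs and regularity is a reasonable bookkeeping check, but it does not change the route.
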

\begin{proof} The path complexes $\mathfrak C(G)$ and $\mathfrak  D\circ \mathfrak G(G)$ coincide.   
\end{proof}

\begin{example}\label{e3.4}   \rm  The following example  illustrates the technique of computations of the connective path homology groups  $H_*^{\bold c(k)}(G)$.  
For 
$k\geq 3$ in the presented case, there 
is 
nothing to compute. Let $R=\mathbb R$ be the ring of coefficients. Consider a hypergraph $G=(V_G,E_G)$ for which 
   $
V_G=\{1,2, 3, 4\}, \ \ E_G=\{\bold e_1,\bold  e_2,\bold  e_3, \bold e_4, \bold e_5, \bold e_6\},
$
 $
\bold e_1=(\{1\}\to \{2\}), \bold e_2=(\{2\}\to \{3, 4\}), \bold e_3=(\{4\}\to \{1\}), 
$
$
\bold e_4=(\{1\}\to \{2,3\}), \bold e_5=(\{2\}\to \{3\}), \bold e_6=(\{2\}\to \{4\})$.
  
We compute homology of the path complex $\mathfrak C^c(G)=(V^c, P_G^c)$ as in \cite{Hyper}. We have 
$
\mathcal{R}_0^{reg}=\left<1, 2, 3, 4\right>=\Omega_0
$, 
$
\mathcal {R}_1^{reg}=\left<e_{12}, e_{13}, e_{23}, e_{24}, e_{41}\right>
$.
We get $\partial(e_{ij})\in \mathcal{R}_0^{reg}$ for all basic elements  $e_{ij}\in \mathcal{R}_1^{reg}$,  so 
$
\Omega_1=\mathcal{R}_1^{reg}
$.
Thus, $\Omega_1$ is generated by all directed edges of  the digraph $\mathfrak G(G)$ presented below
$$
\begin{matrix}
&  & \underset{3}\bullet &&&&\\
 &\nearrow  &&\nwarrow&&&\\
\overset{1}\bullet &              &\to                          &  &\overset{2}\bullet& &\\
                           &\nwarrow  &&\swarrow&&&\\
&  & \underset{4}\bullet &&&&\\
\end{matrix}
$$
From the definition  of  $\mathfrak G(G)$, it follows that $\Omega_i=0$
for $i\geq 2$ and the homology of the chain complex $\Omega_*$ coincides with the regular path homology $\mathfrak G(G)$. 
Hence
 $
H_0^{\bold c(1)}(G)= H_1^{\bold c(1)}(G)=\mathbb R$ and $H_i^{\bold c(1)}(G)=0$ for $i\geq 2$. 

 For 
$H_i^{\bold c(2)}(G)$,   we have    
$\Omega_0=\left<1, 2, 3, 4\right>$ and by definition
$
\Omega_1=\left<e_{12},  e_{23}, e_{24}\right>
$.
Moreover, $\Omega_i=0$
for $i\geq 2$.  Thus,  homology groups $H_i^{\bold c(2)}(G)$ coincide with the homology groups 
of the digraph  which has the  set of vertices $V_{\mathfrak G(G)}$ and the set of arrows  obtained 
from $E_{\mathfrak G(G)}$ by deleting  arrows $(1\to 3)$ and 
$(4\to 1)$. Hence,   $
H_0^{\bold c(2)}(G)=\mathbb R$ and $H_i^{\bold c(2)}(G)=0$ for $i\geq 1$. 
For $k\geq 3$ we have 
 $
H_0^{\bold c(k)}=\mathbb R^4$ and $H_i^{\bold c(k)}(G)=0$ for $i\geq 1$.
\end{example}  

\begin{lemma}\label{l3.5} Let $G=(V,E)$ be a directed hypergraph and 
$I_1=(0\to 1)$. We have a natural isomorphism
$\mathfrak{C}(G\Box I_1)\cong[\mathfrak{C}(G)]^{\uparrow}$
of 
path complexes.
\end{lemma}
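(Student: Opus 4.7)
The plan is to construct an explicit bijection of path complexes by tracing through the definitions. The vertex sets match canonically via the identification $V_G\times J\cong V_G\amalg V_G'$ sending $(v,0)\mapsto v$ and $(v,1)\mapsto v'$, which is precisely the identification used in constructing $\Pi^{\uparrow}$; what remains is to check that under this identification the set of paths of $\mathfrak{C}(G\Box I_1)$ coincides with $P^{\uparrow}=P\cup P'\cup P^{\#}$, where $P=P_G^1$ is the path set of $\mathfrak{C}(G)$.

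My approach is to analyze each path in $\mathfrak{C}(G\Box I_1)$ through its $J$-coordinate sequence. Since the only arrows of $G\Box I_1$ that change the $J$-coordinate are the vertical hyperedges $A\times\{0\}\to A\times\{1\}$ with $A\in\mathbf{P}_{01}(G)$, all going from level $0$ to level $1$, the level sequence of any path is weakly increasing; this produces three cases, according to whether the path stays entirely at level $0$, stays entirely at level $1$, or transitions once from $0$ to $1$ at some index $k$. For the first two cases, I would use that a horizontal hyperedge $A\times C\to B\times C$ with $C\in\{\{0\},\{1\}\}$ induces exactly the same single-vertex arrows within its layer as the original $(A\to B)\in E_G$ induces in $\mathfrak{G}(G)$; so single-level paths in $\mathfrak{C}(G\Box I_1)$ correspond bijectively to paths in $P$ or $P'$. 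For the mixed case the transition step at position $k$ must be supported by a vertical hyperedge, and I would identify this step with the bridge $i_k\to i_k'$ of the path $q_k^{\#}=(i_0\dots i_k i_k' i_{k+1}'\dots i_n')\in P^{\#}$, where the underlying level-$0$ and level-$1$ fragments are forced by the previous analysis to come from a common $q\in P$.

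The reverse inclusion $P^{\uparrow}\hookrightarrow\mathfrak{C}(G\Box I_1)$ is direct: every element of $P$ (resp.\ $P'$) realizes as a path in $\mathfrak{C}(G\Box I_1)$ via horizontal hyperedges, while each $q_k^{\#}\in P^{\#}$ realizes because the required vertical hyperedge $A\times\{0\}\to A\times\{1\}$ exists---every vertex of $G$ lies in some $A\in\mathbf{P}_{01}(G)$, as $V_G=\bigcup_{(A\to B)\in E_G}(A\cup B)$. Naturality of the resulting bijection with respect to morphisms of directed hypergraphs is immediate from its purely vertex-level description.

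The step I expect to be the chief obstacle is the mixed case of the forward direction: one must confirm that the transition step in $\mathfrak{C}(G\Box I_1)$ matches exactly the bridge shape prescribed by $P^{\#}$ and that no paths of additional shapes arise. This is delicate because a vertical hyperedge $A\times\{0\}\to A\times\{1\}$ may have a non-singleton layer $A$, so care is needed to align the induced transition with the specific form $i_k\to i_k'$ appearing in the definition of $P^{\#}$.
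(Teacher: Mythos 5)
Your case decomposition by the $J$-coordinate, your treatment of the two single-level cases, and your reverse inclusion $P^{\uparrow}\hookrightarrow\mathfrak{C}(G\Box I_1)$ all match the paper's (much terser) argument and are correct as far as they go. But the step you flag as ``the chief obstacle'' is not merely delicate --- it is where the argument genuinely breaks, and it cannot be repaired. A vertical hyperedge $C\times\{0\}\to C\times\{1\}$ with $C\in{\mathbf{P}}_{01}(G)$ makes \emph{every} $(v,0)$ with $v\in C$ an initial vertex and \emph{every} $(w,1)$ with $w\in C$ a terminal vertex of that arrow, so by the definition of $\mathfrak{C}=\mathfrak{C}^1$ the step from $(v,0)$ to $(w,1)$ is allowed for \emph{all} pairs $v,w\in C$, not only for $w=v$. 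Hence, as soon as some $C\in{\mathbf{P}}_{01}(G)$ has at least two elements, $\mathfrak{C}(G\Box I_1)$ contains mixed-level paths whose unique cross-level step has the form $v\to w'$ with $v\neq w$, and these are not of the bridge shape $i_k\to i_k'$ prescribed by $P^{\#}$. Concretely, for $V=\{1,2,3\}$ and $E=\{(\{1,2\}\to\{3\})\}$ the $1$-path $\left((1,0)\,(2,1)\right)=(1\,2')$ lies in $\mathfrak{C}(G\Box I_1)$ but not in $P\cup P'\cup P^{\#}$; counting $1$-paths on the two sides shows the complexes are not even abstractly isomorphic, so no realignment of the transition step can save the stated equality. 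What survives is exactly the inclusion $[\mathfrak{C}(G)]^{\uparrow}\subseteq\mathfrak{C}(G\Box I_1)$ (your reverse direction), which is all that the homotopy-invariance argument of Theorem 3.6 actually uses, in precise parallel with the bold case treated in Lemma 3.10.

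For what it is worth, the paper's own proof has the identical gap: its case (3) records only that $i_k\in C\times\{0\}$ and $i_{k+1}\in C\times\{1\}$, and then asserts without justification that the resulting paths constitute $P^{\#}$. So your instinct to isolate this point was exactly right; the honest conclusion is not that a cleverer alignment is needed, but that the statement should be weakened to an inclusion of path complexes (restricting to the natural identifications on the top and bottom copies), which suffices for the intended application.
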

\begin{proof} By Definition \ref{d2.3} a directed hypergraph $G\Box I_1=(V_{G\Box I_1},E_{G\Box I_1}) $ 
has the set of vertices $V_{G\Box I_1}= V\times J=V\times \{0,1\}$ which we 
identify  with $V\cup V^{\prime}$,  where $V=\{0,\dots, n\}, \ V^{\prime}=\{0^{\prime}, \dots, n^{\prime}\}$ and  the set of edges  $E_{G\Box I_1}$ is the union  $E^0\cup E^1\cup  E^{01}$
of sets 
$E^i=\{A\times\{i\}\to B\times\{i\}\}$ with  $(A\to B)\in E_G$ for  $i=0,1$ 
and  $E^{01}=\{C\times\{0\}\to C\times\{1\}\}$ with  $C\in \mathbb{S}_{01}(G)$. Let 
$q=\left(i_0\dots i_n\right)$ be a path lying in $\mathfrak{C}(G\Box I_1)$. It follows from definition, that there are only three possibilities, namely

(1)  all the vertices $i_j\in V\times \{0\}$ and, hence, $q$ determines  the unique path in 
in $\mathfrak{C}(G)$, 

(2) all the vertices $i_j\in V\times \{1\}$ and, hence, $q$ determines the unique path in 
in $[\mathfrak{C}(G)]^{\prime}$, 

(3) there exists exactly one pair $(i_k,i_{k+1})$ of consequent vertices  in $q$ such that  $i_k\in C\times \{0\}, 
i_{k+1}\in C\times  \{1\}$ for $C\in \mathbb{S}_{01}(G)$. 

Thus, the union of paths  from (1)-(3) on the set  of vertices 
$V\times J$  defines the path complex $[\mathfrak{C}(G)]^{\uparrow}$ and vice versa. 
\end{proof}

\begin{theorem}\label{t3.6} For a directed hypergraph $G$,   the connective path homology groups $H^{\bold c}(G)$
are homotopy invariant.
\end{theorem}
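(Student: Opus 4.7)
My plan is to reduce the statement to the known homotopy invariance of regular path homology for path complexes (as developed in the cited works \cite{Hyper}, \cite{Pcomplex}, \cite{Mi4}), using Lemma \ref{l3.5} as the bridge between the cylinder $G\Box I_1$ in $\mathcal{DH}$ and the cylinder $\Pi^{\uparrow}$ in the category of path complexes.

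First I would handle the one-step case. Suppose $f_0\simeq_1 f_1\colon G\to H$ via $F\colon G\Box I_1\to H$. Functoriality of $\mathfrak{C}$ (Proposition \ref{p3.1}) gives $\mathfrak{C}(F)\colon \mathfrak{C}(G\Box I_1)\to \mathfrak{C}(H)$, and Lemma \ref{l3.5} lets us view this as a morphism $\widetilde F\colon [\mathfrak{C}(G)]^{\uparrow}\to \mathfrak{C}(H)$. The two natural inclusions $i_\bullet, j_\bullet\colon \mathfrak{C}(G)\to [\mathfrak{C}(G)]^{\uparrow}$ described in Section \ref{S2} satisfy $\widetilde F\circ i_\bullet=\mathfrak{C}(f_0)$ and $\widetilde F\circ j_\bullet=\mathfrak{C}(f_1)$, which follows by inspecting what $F$ does on the two copies $V\times\{0\}$ and $V\times\{1\}$ via Definition \ref{d2.4}.

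Next I would invoke the key input from the path-complex literature: for any path complex $\Pi$, the two chain maps induced by $i_\bullet$ and $j_\bullet$ on regular path chains are chain homotopic, and hence induce equal maps on $H_*$ (this is the cylinder lemma that powers homotopy invariance of path homology, proved in \cite{Mi4} and \cite{Pcomplex}). Applying this to $\Pi=\mathfrak{C}(G)$ and composing with the map induced by $\widetilde F$ on homology yields $\mathfrak{C}(f_0)_*=\mathfrak{C}(f_1)_*$ on $H^{\bold c}_*$. Proposition \ref{p2.5} together with a straightforward induction then upgrades this from one-step homotopy to arbitrary homotopy. Finally, for homotopy equivalent hypergraphs $G\simeq H$ with inverses $f,g$, one has $\mathfrak{C}(fg)_*=(\mathrm{Id}_H)_*$ and $\mathfrak{C}(gf)_*=(\mathrm{Id}_G)_*$, so $f_*$ is an isomorphism on $H^{\bold c}_*$.

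The only nontrivial point is verifying the identifications $\widetilde F\circ i_\bullet=\mathfrak{C}(f_0)$ and $\widetilde F\circ j_\bullet=\mathfrak{C}(f_1)$ under the isomorphism of Lemma \ref{l3.5}; this is a direct but necessary check using the description of $P^\uparrow$ and the definition of $G\Box I_1$. Everything else is formal: the substantive homotopy invariance is already packaged in the path-complex cylinder lemma, and Lemma \ref{l3.5} is precisely what allows that package to be applied here. Note also that the argument uses only the density-$1$ path complex, so it does not extend to $\mathfrak{C}^k$ for $k\geq 2$, consistent with Example \ref{e3.2}(ii) showing that higher-density theories are not even functorial.
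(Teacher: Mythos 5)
Your proposal is correct and follows essentially the same route as the paper: reduce to a one-step homotopy, use Lemma \ref{l3.5} to turn $\mathfrak{C}(F)$ into a morphism out of the cylinder $[\mathfrak{C}(G)]^{\uparrow}$ restricting to $\mathfrak{C}(f_0)$ and $\mathfrak{C}(f_1)$ on the two ends, and invoke the homotopy invariance of path homology of path complexes (\cite[Th.~3.4]{Hyper}). The paper's proof is just a two-line version of this argument; you have supplied the details it leaves implicit.
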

\begin{proof} By Definition \ref{d2.4}, it is sufficient to prove 
 homotopy invariance for a one-step homotopy. Then the result follows from Lemma \ref{l3.5} and  \cite[Th. 3.4]{Hyper}.
\end{proof}

\subsection{Bold path homology}\label{S32}

Let  $p=\left(i_0\dots i_n\right)$ and $q=\left(j_0\dots j_m\right)$ be two paths of a  path complex 
$\Pi$   with $i_n=j_0$. The \emph{concatenation}  $p\vee q$  of these paths  is  a path  given by 
$
p\vee q=\left(i_0\dots i_nj_1\dots j_m\right)
$. 
The concatenation is well defined only if
 $i_n=j_0$.

For a  directed hypergraph   $G= (V, E)$, 
define a path complex  $\mathfrak B(G)=(V^b_G, P_G^b)$   where $V^b_G=V$ and  
a path $q=(i_0\dots i_n)\in P_{V}$ lies in $P_G^b$  iff  there is a sequence of 
 hyperedges $(A_0\to B_0), \dots, (A_r\to B_r)$  in $E$ such that   
 $B_i\cap A_{i+1}\ne \emptyset$ for $0\leq i\leq r-1$ and  the path $q$ has the 
  presentation 
\begin{equation}\label{3.2}
\left(p_0\vee v_0w_0 \vee p_1  \vee v_1w_1\vee p_2\vee \dots  \vee p_{r}\vee  v_r w_r\vee p_{r+1}\right)
\end{equation}
where $p_0\in P_{A_0}$,  $p_{r+1}\in P_{B_r}$,  $v_i\in A_i$, $w_i\in B_i$, $p_i\in P_{B_{i-1}}\cap  P_{A_{i}}$  
for  $1\leq i\leq r$  and all concatenations in (\ref{3.2}) are well defined.  Note, that in the case of empty 
sequence of edges $A_i\to B_i$ every  path  $q\in P_A$ and every path $q\in P_B$ for an edge $A\to B$ lies
in $P^b_G$. 

 \begin{proposition}\label{p3.7} Let  $f\colon G=(V_G, E_G)\to H=(V_H,E_H)$ be a  morphism of directed hypergraphs. 
Define a morphism  of path complexes
$$
\mathfrak B(f)=(f^b_V, f_p^b) \colon (V_G^b,P^b_G)\to (V_H^b, P^b_H)
$$ 
by  $f^b_V=f\colon V_G^b= V_G\to V_H=V_H^b$ and $f_p^b=f_p|{_{P^b_G}}$,  where 
$f_p$ is defined as in Proposition \ref{p3.1}.
 Thus,    we obtain  a functor $\mathfrak B$  from the category $\mathcal{DH}$ of directed hypergraphs to the category  $\mathcal P$ of path complexes.  \quad $ \blacksquare$
\end{proposition}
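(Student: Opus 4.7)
The proposition asks us to check two things: (a) that for each morphism $f\colon G\to H$ of directed hypergraphs the pair $\mathfrak{B}(f)=(f_V^b,f_p^b)$ really is a morphism of path complexes, i.e.\ that $f_p^b$ sends $P_G^b$ into $P_H^b$; and (b) that the assignment $f\mapsto \mathfrak{B}(f)$ respects identities and compositions. Part (b) is immediate from $(\operatorname{Id}_G)_V=\operatorname{Id}_{V_G}$ and $(g\circ f)_V=g_V\circ f_V$ together with the coordinate-wise definition of $f_p$, so the entire content of the proposition lies in (a).

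My plan for (a) is to take an arbitrary $q\in P_G^b$ with a presentation of the form (\ref{3.2}) realized by hyperedges $(A_0\to B_0),\ldots,(A_r\to B_r)$ in $E_G$ with $B_i\cap A_{i+1}\neq\emptyset$, and to exhibit an analogous presentation for $f_p(q)$ using hyperedges of $H$. The commutativity of the square in Definition \ref{d2.2} forces $f_E(A_i\to B_i)=(f_V(A_i)\to f_V(B_i))$, so each image pair is actually a hyperedge of $H$, with $f_V(A_i)$ and $f_V(B_i)$ non-empty and disjoint exactly as the definition of a directed hypergraph requires.

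The remaining verifications are then essentially bookkeeping. First, $f_V(B_i)\cap f_V(A_{i+1})\neq\emptyset$ because any $u\in B_i\cap A_{i+1}$ yields $f_V(u)$ in the image intersection. Second, $f_p$ sends paths on any $X\subseteq V_G$ to paths on $f_V(X)\subseteq V_H$, so the pieces $f_p(p_0)$, $f_p(p_{r+1})$, $f_p(p_i)$ land respectively in $P_{f_V(A_0)}$, $P_{f_V(B_r)}$, and $P_{f_V(B_{i-1})}\cap P_{f_V(A_i)}$. Third, the transition pairs $f_V(v_i)f_V(w_i)$ with $v_i\in A_i$, $w_i\in B_i$ are of the required form for the image edges, and all concatenations remain well-defined because $f_p$ preserves endpoints of paths. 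Assembling these facts yields an explicit presentation of $f_p(q)$ of the shape (\ref{3.2}) with respect to the image edges, so $f_p(q)\in P_H^b$.

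The only delicate aspect, and the main potential obstacle, is the degenerate cases explicitly permitted after (\ref{3.2}): an empty sequence of hyperedges (when $q$ sits inside a single origin or terminus of a single hyperedge of $G$) and the possibility that some $p_i$ reduces to a one-vertex path. Both dissolve under the same observation that $f_V(X)\supseteq f_V$--image of every vertex of $X$, so paths on $X$ go to paths on $f_V(X)$ and remain legitimate witnesses in the image path complex. Once these degenerate cases are handled, functoriality is purely formal and the proposition is established.
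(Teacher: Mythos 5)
Your argument is correct: the paper marks this proposition with $\blacksquare$ and omits the proof as evident, and your verification (that the commutative square of Definition \ref{d2.2} forces $f_E(A_i\to B_i)=(f_V(A_i)\to f_V(B_i))$, so that applying $f_p$ to a presentation of the form (\ref{3.2}) yields a presentation of the same form in $H$, with the nonempty intersections, the concatenation conditions, and the degenerate empty-sequence case all preserved) is exactly the routine check the authors are relying on. Functoriality is, as you say, immediate from the coordinate-wise definition of $f_p$.
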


 Let us define  \emph{the bold path homology groups}  of  directed  hypergraph 
 $G$ by 
$
H_*^{\bold b}(G)\colon = H_*(\mathfrak B(G))
$.
By 
Proposition \ref{p3.7},  we  obtain a  functorial 
 path homology  theory on the category $\mathcal{DH}$ of directed hypergraphs. 

\begin{example}\label{e3.8} \rm   Let $G=(V, E)$ be a directed 
 hypergraph such that for  every edge $\bold e=(A\to B)\in E$ the sets $A$ and $B$ 
are one-vertex sets, 
$A=\{v\}, B=\{w\}, v,w\in V$. 
 We can consider the hypergraph $G$ as a digraph and
  $H^{\bold c}_*(G)\cong H^{\bold b}_*(G)$.
On the category of connected digraphs that can be considered as the
 subcategory of directed hypergraphs,   the bold path homology groups are 
 naturally isomorphic to the connective path homology groups and to the regular path
  homology groups  $H_*(G)$ defined in  \cite{Axioms}. 
\end{example} 

\begin{example}\label{e3.9} \rm Now we compute  the bold path homology groups 
	$H^{\bold b}_*(G)$ of  the directed hypergraph $G$  from  Example \ref{e3.4} in  dimensions 0,1,2 for $R=\mathbb R$. 
 First,  we describe the modules $\mathcal R_n^{reg}(\mathfrak B(G))$  for $0\leq n\leq 4$.  We have 
$$
\mathcal R_0^{reg}=\langle e_1,e_2,e_3,e_4\rangle, \ \
\mathcal R_1^{reg}=\langle e_{12}, e_{13},
e_{23},e_{24},
e_{32}, e_{34}, 
e_{43},e_{41}\rangle,
 $$
$$
\mathcal R_2^{reg}=\langle e_{123},e_{124},
 e_{132},
 e_{232}, e_{234}, 
e_{241}, e_{243},
e_{323},
 e_{343},
 e_{434},
e_{412}, e_{413}\rangle,
 $$
$$
\begin{matrix}
\mathcal R_3^{reg}=\langle e_{1232},e_{1234}, 
e_{1241},   e_{1243},
e_{1323},
e_{2323},   
e_{2343},\\
e_{2412}, e_{2413},
e_{2434},
e_{3232},
e_{3434}, 
e_{4343},
e_{4123}, e_{4124},
e_{4132}
\rangle,
\end{matrix}
 $$
$$
\begin{matrix}
\mathcal R_4^{reg}=
\langle e_{12323},
e_{12343}, 
e_{12412},  e_{12413}, 
 e_{12434},
e_{13232},
e_{23232},   
e_{23434},\\
e_{24123},
 e_{24132},
e_{24343},
e_{32323},
e_{34343}, 
e_{43434},
e_{41232},e_{41234},
 e_{41243},
e_{41323}
\rangle.
\end{matrix}
 $$
$$
 \Omega_n=\mathcal{R}_n^{reg} \ \ \text{for} \ \ n=0,1.
$$
Thus $\Omega_0$ is generated by all the vertices and $\Omega_1$ is generated 
by all directed edges of  the digraph $H$ on Fig. 1.  

\begin{figure}[th]\label{fig1}
\centering
\begin{tikzpicture}
\node (1) at (4,3) {$1$};
\node (2) at (8,3) {$2$};
\node (3) at (6,5) {$3$};
\node (4) at (6,2) {$4$};
\draw (1) edge[ thick, ->] (3);
\draw (4) edge[ thick, ->] (1);
\draw (2) edge[thick, ->] (4);
\draw (3) edge[bend right=12, thick, ->]  node [left]{} (2);
\draw (2) edge[bend right=15, thick, ->] node [right]{} (3);
\draw (1) edge[bend right=90,  thick, ->] node [right]{} (2);
\draw (3) edge[bend right=15,  thick, ->] node [right]{} (4);
\draw (4) edge[bend right=15,  thick, ->] node [right]{} (3);
\end{tikzpicture}
\caption{The digraph $H$.}
\end{figure}
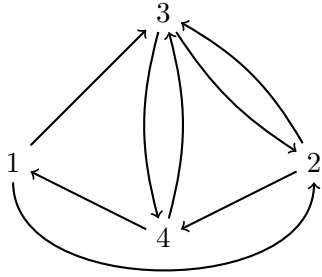
As it follows from the path homology theory of digraphs, the rank 
of the image 
$\partial\colon   \Omega_1\to \Omega_0$ is equal to 3, 
the rank 
of the kernel $\partial$ is equal to 5,  
and hence  $H^{\bold b}_0(G)=\mathbb R$.
  
By the direct computation $\Omega_2$ is the vector space with the following basis:
$
\{
e_{123},
e_{132},
e_{232}, 
e_{234},
 e_{243},
e_{323}, 
e_{343},
e_{434},
e_{413}
\}
 $.
In this basis the matrix of homomorphism $\partial \colon \Omega_2\to \Omega_1$ has the form:
$$
\left(\begin{matrix}
            &e_{12}&e_{13}&e_{23}&e_{24}&e_{32}&e_{34}&e_{43}&e_{41}\\
e_{123}&1         &-1       &1        &0         &0        &0        &0        &0        \\
e_{132}&-1         &1       &0        &0         &1        &0        &0        &0        \\
e_{232}&0        &0       &1        &0         &1       &0        &0        &0        \\
e_{234}&0        &0      &1        &-1        &0        &1       &0        &0        \\
e_{243}&0        & 0     &-1        &1       &0        &0        &1        &0        \\
e_{323}& 0       &0      &1        &0         &1      &0        &0        &0        \\
e_{343}&0        &0       &0        &0         &0        &1       &1        &0        \\
e_{434}&0        &0       &0        &0         &0        &1       &1        &0        \\
e_{413}&0        &1      &0        &0         &0        &0       &-1      &1        \\
\end{matrix}\right).
$$
Its rank is equal to 5. 
Hence the rank 
of the image of
$\partial$ is equal to 5, 
the rank 
of the kernel $\partial$ is equal to 4,  
and hence  $H^{\bold b}_1(G)=0$.  

We have 
$
\begin{matrix}
\Omega_3=\langle e_{1232},
e_{1323},
e_{2323},   
e_{2343},
e_{2434},
e_{3232},
e_{3434}, 
e_{4343}
\rangle.\\
\end{matrix}
 $
Similar to the previous calculation,  the rank 
of the image 
$\partial\colon   \Omega_3\to \Omega_2$ is equal to 4, 
the rank 
of the kernel $\partial$ is  equal to 4,  
and hence  $H^{\bold b}_2(G)=0$.  

We have 
$\Omega_4=\langle e_{12323}, e_{13232}, e_{23232}, 
e_{23434}, 
e_{24343}, e_{32323}, e_{34343}, e_{43434}\rangle 
 $ 
 and, similar to the previous calculation, the  rank 
of the image 
$\partial\colon   \Omega_4\to \Omega_3$ is equal to 4, 
the rank 
of the kernel $\partial$ is equal to 4.
Hence  $H^{\bold b}_3(G)=0$.  
\end{example} 

\begin{lemma}\label{l3.10}  Let $G=(V,E)$ be a directed hypergraph and 
$I_1=(0\to 1)$  the digraph. There is an inclusion 
$\lambda\colon [\mathfrak{B}(G)]^{\uparrow}\to \mathfrak{B}(G\Box I_1)$
of path complexes. The restrictions of 
$\lambda$ to the  images of the morphisms $i_{\bullet}$ and $j_{\bullet}$,   defined in Section \ref{S2}, 
are the natural identifications. 
\end{lemma}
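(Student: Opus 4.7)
The plan is to define $\lambda$ via the natural identification $V \cup V' \leftrightarrow V \times J$ on vertices, and then verify on each of the three families $P$, $P'$, $P^{\#}$ comprising the path set of $[\mathfrak{B}(G)]^{\uparrow}$ that the image is a bold path in $G \Box I_1$. Injectivity is immediate from the vertex bijection, and the asserted behaviour on the images of $i_{\bullet}$ and $j_{\bullet}$ follows at once from the construction, since by definition $\lambda \circ i_{\bullet}$ and $\lambda \circ j_{\bullet}$ send a bold path $(i_0 \ldots i_n)$ in $G$ to the same sequence viewed in level $0$ or level $1$ of $G \Box I_1$. For a path in $P$ (respectively $P'$) with bold decomposition using hyperedges $A_0 \to B_0, \ldots, A_r \to B_r$, the same decomposition lifts to level $0$ (respectively level $1$) of $G \Box I_1$ using the hyperedges $A_i \times \{0\} \to B_i \times \{0\}$ (respectively $A_i \times \{1\} \to B_i \times \{1\}$), and each intersection condition $B_i \cap A_{i+1} \ne \emptyset$ is inherited by crossing with $\{0\}$ (respectively $\{1\}$).

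The main work lies in the $P^{\#}$ case. Given $q^{\#}_k$ arising from $q = (i_0 \ldots i_n) \in P^b_G$ with decomposition as in (3.2), I would split $q$ at $k$ into the prefix $(i_0 \ldots i_k)$ and the suffix $(i_k \ldots i_n)$, each of which inherits a bold decomposition by truncating the hyperedge sequence of $q$, lift the prefix to level $0$ and the suffix to level $1$ as above, and bridge them with the transition hyperedge $C \times \{0\} \to C \times \{1\}$, where $C \in \mathbf{P}_{01}(G)$ is chosen to contain $i_k$. Such a $C$ always exists: examining the decomposition of $q$ shows that $i_k$ belongs to at least one of the sets $A_j$, $B_j$, or $B_{j-1} \cap A_j$, each of which lies in $\mathbf{P}_{01}(G)$; in the degenerate case of an empty hyperedge sequence, $q$ lies in some $P_A$ or $P_B$ for a hyperedge of $G$ and one takes $C = A$ or $C = B$. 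The merged hyperedge sequence in $G \Box I_1$ then satisfies all intersection conditions, since the inherited ones lift automatically and the two new intersections adjacent to the transition hyperedge are non-empty because they contain $(i_k, 0)$ and $(i_k, 1)$ respectively by the choice of $C$.

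The expected main obstacle is the bookkeeping in this last case: depending on whether $i_k$ lies strictly inside some intermediate piece $p_j$, coincides with a $v_j$, or coincides with a $w_j$, the truncated hyperedge sequences for the prefix and suffix differ slightly, and the natural choice of $C$ varies accordingly (for instance $C = A_j$ or $C = B_{j-1}$ when $i_k$ is interior to $p_j$, $C = A_j$ when $i_k = v_j$, and $C = B_j$ when $i_k = w_j$). A careful sub-case analysis on the position of $i_k$ within the decomposition of $q$ is what keeps the argument uniform; everything else is a straightforward lift of the decomposition from $G$ to $G \Box I_1$.
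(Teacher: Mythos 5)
Your proposal is correct and follows essentially the same route as the paper: the paper likewise identifies $\mathfrak{B}(G)$ and $\mathfrak{B}(G)'$ with the two levels $\mathfrak{B}(G\Box\{0\})$ and $\mathfrak{B}(G\Box\{1\})$, and then handles a path $q_k^{\#}\in [P^b_G]^{\#}$ by inserting a transition arrow $C\times\{0\}\to C\times\{1\}$ with $i_k\in C\in\mathbf{P}_{01}(G)$ into the lifted hyperedge sequence, splitting into exactly the sub-cases you identify (the pair $(i_k,i_{k+1})$ inside an intermediate piece $p_s$ with $s\geq 1$, inside $p_0$, or equal to $(v_s,w_s)$), with the same choices of $C$ (namely $B_{s-1}$, $A_0$, and $A_s$ respectively). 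The only difference is that the paper writes out the resulting presentation explicitly via formulas for the new sets $A_i^{\#}$, $B_i^{\#}$ and pieces $p_i^{\#}$, which is bookkeeping rather than a different idea.
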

\begin{proof} By definition in Section \ref{S2}, 
  we have
$
[\mathfrak{B}(G)]^{\uparrow}=(V\times J, [P^b_G]^{\uparrow}))$, 
where $[P^b_G]^{\uparrow}=P^b_G\cup [P^b_G]^{\prime}\cup [P^b_G]^{\#} $.  
We have  $V_{G\Box I_1}= V\times J=V\times \{0,1\}=V\cup V^{\prime}$ with
$V=\{0,\dots, n\}, \ V^{\prime}=\{0^{\prime}, \dots, n^{\prime}\}$ and   
$E_{G\Box I_1}$ is  the union  of sets $E^0\cup E^1\cup  E^{01}$,  where
$
E^i=\{A\times\{i\}\to B\times\{i\} \, | \, (A\to B)\in E_G\}
$
 for  $i=0,1$ and 
$
E^{01}=\{C\times\{0\}\to C\times\{1\} \, | \, C\in {\mathbf{P}}_{01}(G))\}
$.  
Now it follows
that 
$
\mathfrak{B}(G)=\mathfrak{B}(G\Box\{0\}), \mathfrak{B}(G)^{\prime}=\mathfrak{B}(G\Box\{1\})
$, 
where $\mathfrak{B}(G\Box\{0\}),\mathfrak{B}(G\Box\{1\})\subset \mathfrak{B}(G\Box I_1)$.
Let $q=(i_0\dots i_n)$ be $n$-path in $\mathfrak{B}(G)=\mathfrak{B}(G\Box\{0\})$. Consider  
its presentation in the form (\ref{3.2}) and let   $A_i, B_i$ be the corresponding sets 
of vertices. 
 For   $0\leq k \leq n$,  consider a path 
 $q_k^{\#}=\left(i_0\dots i_ki_k^{\prime}i^{\prime}_{k+1}\dots i_n^{\prime}\right)\in [P^b_G]^{\#}$.  
  We will prove now that this path  in 
$P^b_{G\Box I_1}$. There are  following possibilities for the path $q$.

(1) Vertices
$i_k, i_{k+1}\in p_s$  for $1\leq s\leq r+1$ in presentation (\ref{3.2}). 
Then we  
write  path $q_k^{\#}$ in the form 
\begin{equation}\label{3.3}
q_k^{\#}=\left(p_0^{\#}\vee v_0^{\#}w_0^{\#} \vee p_1^{\#} \vee  \dots  \vee 
p_{r+1}^{\#}\vee  v_{r+1}^{\#} w_{r+1}^{\#}\vee
 p_{r+2}^{\#}\right)
\end{equation}
  putting 
$$
A_i^{\#}=\begin{cases}  A_i\times\{0\} & \text{for} \ i\leq s-1\\
 B_{s-1}\times\{0\} & \text{for} \ i=s\\
A_{s-1}\times\{1\} & \text{for} \ i\geq s+1, \\
\end{cases}\ \ \
B_i^{\#}=\begin{cases}  B_i\times\{0\} & \text{for} \ i\leq s-1\\
B_{s-1}\times\{1\} & \text{for} \ i\geq s. \\
\end{cases}
$$
We have the following arrows  in  $E_{G\Box I_1}$:
$$
(A_i^{\#}\to B_i^{\#})= 
(A_i\times \{0\}\to  B_i\times\{0\}) \ \ \text{for} \  0\leq i\leq s-1, 
$$
$$
(A_s^{\#}\to B_s^{\#})= 
(B_{s-1}\times \{0\}\to  B_{s-1}\times \{1\}), 
$$
$$
(A_i^{\#}\to B_i^{\#})= 
(A_{i-1}\times \{1\}\to  B_{i-1}\times\{1\}) \ \ \text{for} \  s+1\leq i\leq r+2. 
$$
 Using identifications 
$
\mathfrak{B}(G)=\mathfrak{B}(G\Box\{0\}), \mathfrak{B}(G)^{\prime}=\mathfrak{B}(G\Box\{1\})
$,  
we obtain
\begin{equation}\label{3.4}
p_i^{\#}= \begin{cases}p_i& \text{for}  \  i\leq s-1\\
p_i^{\prime}& \text{for}  \  s+2\leq i\leq r+2\\
(w_{s-1}\dots i_s) &\text{for}  \  i=s\\
(i_{s}^{\prime}i_{s+1}^{\prime} \dots v_s^{\prime})&\text{for}  \  i=s+1\\
\end{cases}
\end{equation}
where 
$
(w_{s-1}\dots i_s)\in P_{B_{s-1}^{\#}\cap A_s^{\#}}=
P_{B_{s-1}\times\{0\}}
$, 
$
\left(i_{s}^{\prime}i_{s+1}^{\prime} \dots v_s^{\prime}\right)
\in P_{B_{s}^{\#}\cap A_{s+1}^{\#}}=
P_{(B_{s-1}\times\{1\})\cap (A_{s}\times\{1\})}
$.
Paths $p_i^{\#}$ in (\ref{3.4}) 
 define  vertices 
$v_i^{\#}, w_{i}^{\#}$ in (\ref{3.3}). 
Hence,   (\ref{3.3}) gives a presentation of $q_k^{\#}$ in the form (\ref{3.2}) 
for the hypergraph $G\Box I_1$  and  $q_k^{\#}\in P_{G\Box I_1}^b$ in the considered case. 

(2)  
Vertices  $i_k,i_{k+1}\in p_0$ in presentation (\ref{3.2}).  Then we  
write  
path $p_k^{\#}$ in the form  
(\ref{3.3})  putting 
$$
A_i^{\#}=\begin{cases}  A_0\times\{0\} & \text{for} \ i=0\\
 A_{i-1}\times\{1\} & \text{for} \ 1\leq  i\leq r+2, \\
\end{cases} 
$$
$$
B_i^{\#}=\begin{cases}  A_0\times\{1\} & \text{for} \ i=1\\
B_{i-1}\times\{1\} & \text{for} \ 2\leq  i\leq r+2 \\
\end{cases}
$$
and  
$$
p_i^{\#}= \begin{cases}(i_0\dots i_k)& \text{for}  \  i=0\\
\left(i_k^{\prime}\dots v_0^{\prime}\right)& \text{for}  \  i=1\\
p_{i-1}^{\prime} &\text{for}  \ 2\leq i \leq r+2\\
\end{cases}
$$
where 
$(i_0\dots i_k)\in P_{A_0^{\#}}$,  $\left(i_k^{\prime}\dots v_0^{\prime}\right)\in 
P_{B_0^{\#}\cap  A_1^{\#}}=P_{A_{0}\times\{1\}}$
Hence,   (\ref{3.3}) gives a presentation of $q_k^{\#}$ in the form (\ref{3.2})
in the hypergraph $G\Box I_1$  and  $q_k^{\#}\in P_{G\Box I_1}^b$ in the considered case.

(3) Let $i_k=v_s, i_{k+1}=w_s$  for $0\leq s\leq r$ in the presentation (\ref{3.2}). 
 Then we   
 write   
 path $q_k^{\#}$ in the form  
(\ref{3.3})  putting 
$$
A_i^{\#}=\begin{cases}  A_i\times\{0\} & \text{for} \ i\leq s\\
 A_{s+1}\times\{1\} & \text{for} \ i=s+1\\
A_{s-1}\times\{1\} & \text{for} \ i\geq s+2, \\
\end{cases}\ 
B_i^{\#}=\begin{cases}  B_i\times\{0\} & \text{for} \ i\leq s-1\\
A_s\times\{1\} & \text{for} \ i=s\\
B_{s-1}\times\{1\} & \text{for} \ i\geq s+1. \\
\end{cases}
$$
We have the following arrows  in $E_{G\Box I_1}$:
$$
(A_i^{\#}\to B_i^{\#})= 
(A_i\times \{0\}\to  B_i\times\{0\}) \ \ \text{for} \  0\leq i\leq s-1, 
$$
$$
(A_s^{\#}\to B_s^{\#})= 
(A_{s}\times \{0\}\to  A_{s}\times \{1\}), 
$$
$$
(A_i^{\#}\to B_i^{\#} )= 
(A_{i-1}\times \{1\}\to  B_{i-1}\times\{1\}) \ \ \text{for} \  s+1\leq i\leq r+2. 
$$
Similarly to  case (1),  we have  
$$
p_i^{\#}= \begin{cases}p_i& \text{for}  \  i\leq s\\
(v_s^{\prime})& \text{for}  \  i=s+1\\
p_{i-1}^{\prime} &\text{for}  \ s+2\leq i \leq r+2,\\
\end{cases}
$$
where 
$
w_{s}^{\#}=v_s, v_{s}^{\#}=v_s^{\prime}, 
w_{s+1}^{\#}=v_s^{\prime}$.
Hence,   (\ref{3.3}) gives a presentation of $q_k^{\#}$ in the form (\ref{3.2})
in  the hypergraph $G\Box I_1$  and  $q_k^{\#}\in P^b_{G\Box I_1}$ in the considered case.  
\end{proof}

\begin{theorem}\label{t3.11}  Let $G$ be a directed hypergraph. The bold  path homology groups $H^{\bold b}_*(G)$
are homotopy invariant.
\end{theorem}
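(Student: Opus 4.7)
The plan is to mirror the proof of Theorem \ref{t3.6}, with Lemma \ref{l3.10} taking over the role played there by the isomorphism of Lemma \ref{l3.5}. First I would reduce to the one-step case: by Definition \ref{d2.4}(ii), the relation $\simeq$ is the transitive closure of $\simeq_1$, so it suffices to prove that $\mathfrak B(f_0)_{*}=\mathfrak B(f_1)_{*}$ whenever $f_0\simeq_1 f_1$.

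Given a one-step homotopy $F\colon G\Box I_1\to H$ with $F|_{G\Box\{0\}}=f_0$ and $F|_{G\Box\{1\}}=f_1$, the functor of Proposition \ref{p3.7} produces a morphism of path complexes $\mathfrak B(F)\colon \mathfrak B(G\Box I_1)\to \mathfrak B(H)$. Composing with the inclusion $\lambda\colon [\mathfrak B(G)]^{\uparrow}\to \mathfrak B(G\Box I_1)$ of Lemma \ref{l3.10} gives a morphism
$$
\Phi=\mathfrak B(F)\circ\lambda\colon [\mathfrak B(G)]^{\uparrow}\longrightarrow \mathfrak B(H).
$$
By the second assertion of Lemma \ref{l3.10}, $\lambda\circ i_{\bullet}$ and $\lambda\circ j_{\bullet}$ are just the natural identifications of $\mathfrak B(G)$ with $\mathfrak B(G\Box\{0\})$ and $\mathfrak B(G\Box\{1\})$ inside $\mathfrak B(G\Box I_1)$. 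Combining this with the defining property of a one-step homotopy, one reads off $\Phi\circ i_{\bullet}=\mathfrak B(f_0)$ and $\Phi\circ j_{\bullet}=\mathfrak B(f_1)$.

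At this point I would invoke the standard homotopy-invariance theorem for morphisms of path complexes, namely \cite[Th.~3.4]{Hyper}: if two morphisms $g_0,g_1\colon \Pi\to \Pi'$ of path complexes extend to a morphism $\Pi^{\uparrow}\to \Pi'$ via $i_{\bullet}$ and $j_{\bullet}$, then they induce the same map on regular path homology. Applied to $g_0=\mathfrak B(f_0)$, $g_1=\mathfrak B(f_1)$ together with the extension $\Phi$ just constructed, this yields $\mathfrak B(f_0)_{*}=\mathfrak B(f_1)_{*}$ on $H_{*}^{\bold b}$, which is what we need.

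The real work has already been absorbed into Lemma \ref{l3.10}, so no further obstacle should arise; the only subtle point is to recognise that, in contrast to the connective case, one has only an inclusion $[\mathfrak B(G)]^{\uparrow}\hookrightarrow \mathfrak B(G\Box I_1)$ rather than an isomorphism. This suffices because the homotopy-invariance machinery for path complexes demands only a morphism \emph{out of} $[\mathfrak B(G)]^{\uparrow}$, and such a morphism is obtained by restricting $\mathfrak B(F)$ along $\lambda$.
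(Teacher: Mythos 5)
Your proposal is correct and follows essentially the same route as the paper's own proof: reduce to a one-step homotopy, compose the inclusion $\lambda$ of Lemma \ref{l3.10} with $\mathfrak B(F)$ to obtain a homotopy of path complexes between $\mathfrak B(f_0)$ and $\mathfrak B(f_1)$, and conclude by \cite[Th.~3.4]{Hyper}. Your closing remark that only a morphism out of $[\mathfrak B(G)]^{\uparrow}$ is needed (so the inclusion suffices in place of an isomorphism) is exactly the point the paper relies on.
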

\begin{proof} By Definition \ref{d2.4}, it is sufficient to prove 
 homotopy invariance for  the one-step homotopy.  Let $f_0, \, f_1\colon G\to H$ be 
  one-step homotopic morphisms of directed hypergraphs with  homotopy 
$F\colon G\Box I_1\to H$,  where $I_1=(0\to 1)$. 
Since $\mathfrak B$ is a functor,  we obtain
morphisms of path complexes
$
\mathfrak B(f_0), \, \mathfrak B(f_1)\colon \mathfrak B(G)\to \mathfrak B(H)$
and $\mathfrak B(F)\colon \mathfrak B(G\Box I_1)\to \mathfrak B(H)$. Consider the composition 
$
[\mathfrak{B}(G)]^{\uparrow}\overset{\lambda}{\longrightarrow} \mathfrak{B}(G\Box I_1)\overset{\mathfrak B(F)}{\longrightarrow}
\mathfrak B(H)
$
which gives a homotopy between morphisms 
$\mathfrak B(f_0)$ and $\mathfrak B(f_1)$ of
path complexes by using identifications of the top and the bottom 
of $[\mathfrak{B}(G)]^{\uparrow}$
described in Lemma \ref{l3.10}.
Now the result follows from 
\cite[Th. 3.4]{Hyper}.
\end{proof}

\subsection{Non-directed path homology}\label{S33}

In this subsection, we describe several path  homology theories on the category of directed  hypergraphs  
$\mathcal{DH}$ that are based on    functorial relations between  hypergraphs and directed  hypergraphs. 

 For  a hypergraph $G=(V_G,E_G)$,  we have  a natural map   
 $\phi_G\colon  E_G \to  {\mathbf{P}}(V_G)\setminus \emptyset$.
 A  \emph{morphism} of hypergraphs $f\colon G\to H=(V_H,E_H)$   
 is given by the pair of maps  $f_V\colon V_G\to V_H$ 
and $f_E\colon E_G\to E_H$ 
 such that  ${\mathbf{P}}(f_V)\circ \phi_G=\phi_H\circ f_E$,  
where 
${\mathbf{P}}(f_V)\colon{\mathbf{P}}(V_G)\setminus \emptyset \to {\mathbf{P}}(V_H)\setminus \emptyset$ 
is the map induced by $f_V$. So  we may turn to 
the category   of hypergraphs  $\mathcal H$ in \cite{Hyper}.

First,  define  a functor from  category $\mathcal{DH}$
 to category $\mathcal H$. For a finite set $X$, define  a map $\sigma_X\colon \mathbb P(X)\to {\mathbf{P}}(X)$ by setting $\sigma_X(A,B)=A\cup B$.   
 Let $G=(V, E)$ be a directed hypergraph. 
Define a hypergraph $\mathfrak E(G)=(V^e, E^e)$ where  $V^e=V$ and 
\begin{equation}\label{3.5}
E^{e}=\{C\in {\mathbf{P}}(V)\setminus \emptyset \, | \, C=A\cup B, (A\to B)\in E\}.
\end{equation}
Recall that in Section \ref{S2},  for a directed hypergraph $G=(V,E)$ we defined a map $\varphi_G\colon E\to \mathbb P(V)$ by $\varphi_G(A\to B)=
(A,B)$.

 \begin{proposition}\label{p3.12} Let  $f=(f_V, f_E)\colon G=(V_G, E_G)\to H=(V_H,E_H)$ be a  morphism of directed hypergraphs. 
Define a map $f_E^e\colon  E^e_G\to {\mathbf{P}}(V_H)$ 
putting 
$
f_E^e(C)=[{\mathbf{P}}(f_V)](C)
$
for every  $C=A\cup B\in E^e_G$. 
Then the map $f_E^e$ is a well defined map 
$  E^e_G\to  E^e_H$  and the pair 
$
(f_{V}^e, f_{E}^e)$ with $f_{V}^e=f_V$
defines  a morphism 
$
\mathfrak E(f) \colon  (V_G^e,E^e_G)\to (V_H^e, E^e_H)
$
 of hypergraphs.  Thus,    we obtain  a functor $\mathfrak E$  from  the category 
 $\mathcal{DH}$ of directed hypergraphs to the category  $\mathcal H$ of hypergraphs. 
\end{proposition}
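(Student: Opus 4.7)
The plan is to verify three things in order: first, that the prescribed $f_E^e$ actually lands in $E_H^e$ (not merely in $\mathbf{P}(V_H)\setminus\emptyset$); second, that the pair $(f_V^e,f_E^e)$ satisfies the compatibility square defining a morphism of hypergraphs; and third, that $\mathfrak{E}$ preserves identities and composition.

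For the first step, I would fix an arbitrary $C\in E_G^e$ and, using~\eqref{3.5}, choose some decomposition $C=A\cup B$ with $(A\to B)\in E_G$. Applying Definition~\ref{d2.2}, the commutativity of the square for $f$ forces $f_E(A\to B)$ to be an arrow $(A'\to B')\in E_H$ with $A'=f_V(A)$ and $B'=f_V(B)$. Then
$$
f_E^e(C)=\mathbf{P}(f_V)(C)=f_V(A)\cup f_V(B)=A'\cup B',
$$
which lies in $E_H^e$ again by~\eqref{3.5}. The left-hand side depends only on $C$ as a subset of $V_G$, so any ambiguity arising from multiple decompositions $C=A\cup B$ is irrelevant; the commutative square merely needs to produce at least one witness in $E_H$.

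For the morphism condition, note that for any directed hypergraph $G$ the structural map $\phi_{\mathfrak{E}(G)}\colon E_G^e\to\mathbf{P}(V_G)\setminus\emptyset$ is simply the inclusion, since the edges of $\mathfrak{E}(G)$ are by construction non-empty subsets of $V_G$; the same holds for $\mathfrak{E}(H)$. Under these identifications the required equality $\mathbf{P}(f_V^e)\circ\phi_{\mathfrak{E}(G)}=\phi_{\mathfrak{E}(H)}\circ f_E^e$ collapses to the tautology $\mathbf{P}(f_V)(C)=f_E^e(C)$, which holds by definition.

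Functoriality is then routine: $\mathfrak{E}(\mathrm{id}_G)$ has $f_V^e=\mathrm{id}_{V_G}$ and $f_E^e(C)=C$, while for a composite $h=g\circ f$ the identity $\mathbf{P}(g_V\circ f_V)=\mathbf{P}(g_V)\circ\mathbf{P}(f_V)$ gives $\mathfrak{E}(g\circ f)=\mathfrak{E}(g)\circ\mathfrak{E}(f)$ on both vertices and edges. The one genuine subtlety is the first step: one must check that the formula defining $f_E^e$ via a decomposition of $C$ is in fact decomposition-independent, and that membership in $E_H^e$ is guaranteed precisely by the commutative square of Definition~\ref{d2.2}.
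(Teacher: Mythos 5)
Your proposal is correct and follows essentially the same route as the paper's proof: the only substantive point is that $f_E^e(C)=\mathbf{P}(f_V)(C)$ lands in $E_H^e$, which you establish exactly as the paper does by applying the commutative square of Definition \ref{d2.2} to a witness decomposition $C=A\cup B$ with $(A\to B)\in E_G$ and computing $\mathbf{P}(f_V)(A\cup B)=A'\cup B'$. Your explicit remarks on decomposition-independence and on the hypergraph-morphism square collapsing to a tautology only spell out what the paper dismisses with ``well defined'' and ``functoriality is evident.''
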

\begin{proof} The map $f_E^e$ is 
	well defined. Now we prove that its image lies in $E^e_H$. 
Let  $C\in E^e_G$,  $C=\sigma_{V_G}\circ \varphi_G(A\to B)=A\cup B$ and 
$f_{E_G}(A\to B)=(A^{\prime}\to B^{\prime})\in E_H$. Then,  by Definition \ref{d2.2}, 
$[\mathbb{P}(f_{V_G})](A,B)= (A^{\prime},  B^{\prime})\in\mathbb{P}(V_G))$ and, hence, 
$A^{\prime}=[{\mathbf{P}}(f_V)](A), B^{\prime}=[{\mathbf{P}}(f_V)](B)$. 
We have 
$$
[{\mathbf{P}}(f_V)](C)= [{\mathbf{P}}(f_V)](A\cup B)=\{ [{\mathbf{P}}(f_V)](A)\}\cup 
\{ [{\mathbf{P}}(f_V)](B)\}=A^{\prime}\cup B^{\prime}. 
$$
However, 
$
A^{\prime}\cup B^{\prime}=\sigma_{V_H}\circ \varphi_H(A^{\prime}\to B^{\prime})\in E^e_H
$
and the claim  that morphism $f^e_E$ is well defined is proved. 
The 
functoriality is evident.
\end{proof} 

For a hypergraph   $G= (V, E)$, 
define a path complex  $\mathfrak H^{q}(G)=(V^{q}, P_G^{ q})$ of \emph{density} $q\geq 1$
 where $V^{ q}=V$ and   a path $(i_0\dots i_n)\in P_{V}$ lies $\in P_G^{ q}$  
 iff  every $q$  consequent vertices of this path
lie in a  hyperedge $\mathbf e$, see \cite{Hyper}. 
Thus,  we obtain a collection of functors $\mathfrak H^{ q}$  from  the category $\mathcal H$ to  the category  $\mathcal P$.  
Composition  $\mathfrak H^{ q}\circ \mathfrak E$ gives collection of functors   
from category $\mathcal{DH}$  to category  $\mathcal P$.
 For a  directed hypergraph $G$ define 
\begin{equation*}
H_*^{\bold e(q)}(G)\colon = H_*(\mathfrak H^{q}\circ \mathfrak E(G))
\ \text{for} \ \ q=1,2,\dots \ \ .
\end{equation*}
We  call these groups by  the \emph{non-directed path homology groups of density 
	$q$}  of a directed  hypergraph $G$. We 
denote $H_*^{\bold e}(G)\colon =H_*^{\bold e(1)}(G)$.

\begin{proposition}\label{p3.13} Let $G=(V,E)$ be a directed hypergraph and 
	$\Pi_V$ be a path complex of all paths on the set $V$. Then  
	$H_*^{\bold e}(G)=H_*(\Pi_V)$.
\end{proposition}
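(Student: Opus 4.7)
The plan is to show that $\mathfrak H^{1}\circ\mathfrak E(G)$ and $\Pi_V$ are literally the same path complex, whence the claim about homology is immediate. Both have vertex set $V$ by construction, so everything reduces to identifying the sets of admissible paths.

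First I would unwind definitions. By the definition of density preceding the statement, $P_{\mathfrak E(G)}^{1}$ consists of all sequences $(i_0\dots i_n)$ on $V$ such that every \emph{single} consecutive block of vertices (that is, every individual vertex $i_k$) is contained in some hyperedge of $\mathfrak E(G)$. On the other hand, $\Pi_V$ is by definition the path complex of all sequences on $V$, i.e.\ $P_{\Pi_V}=P_V$. So the content of the statement is exactly the assertion that every vertex of $V$ belongs to at least one edge of $\mathfrak E(G)$.

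Next I would verify this covering property. By Definition \ref{d2.1}(ii), the edges of the directed hypergraph $G=(V,E)$ satisfy $V=\bigcup_{(A\to B)\in E}(A\cup B)$. By construction \eqref{3.5}, each set of the form $A\cup B$ with $(A\to B)\in E$ appears as an edge of $\mathfrak E(G)$. Consequently $V=\bigcup_{C\in E^e}C$, so every vertex of $V$ is contained in some hyperedge of $\mathfrak E(G)$. Thus the defining condition for a path to lie in $P_{\mathfrak E(G)}^{1}$ is automatically satisfied for any sequence $(i_0\dots i_n)\in P_V$, which gives $P_{\mathfrak E(G)}^{1}=P_V$.

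This yields the equality $\mathfrak H^{1}\circ\mathfrak E(G)=\Pi_V$ of path complexes, and taking regular path homology gives the desired isomorphism $H_*^{\bold e}(G)=H_*(\Pi_V)$. There is no real obstacle here; the argument is essentially just parsing the definitions of density $1$ and of a directed hypergraph. The only point that requires a moment's care is that $\mathfrak E$ is built from the \emph{union} $A\cup B$ of the origin and end of an arrow (not just from $A$ or just from $B$), which is what guarantees that the covering property of $E$ transfers to $E^e$.
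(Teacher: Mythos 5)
Your proof is correct and follows essentially the same route as the paper: both arguments observe that the covering condition $V=\bigcup_{(A\to B)\in E}(A\cup B)$ from Definition \ref{d2.1}(ii) guarantees that every vertex lies in some edge of $\mathfrak E(G)$, so the density-one condition is vacuous and $\mathfrak H^{1}\circ\mathfrak E(G)$ coincides with $\Pi_V$ as a path complex. Your write-up simply spells out in more detail what the paper's two-sentence proof leaves implicit.
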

\begin{proof} By Definition \ref{d2.1}, $V=\cup_{\bold e_i\in E} (A_i\cup B_i)$ 
	and every vertex $v\in V^e=V$ lies in an edge $e\in E^e$. So path complexes 
$\Pi_V$ and $\mathfrak H^{1}\circ \mathfrak E(G)$ coincide.
\end{proof}
\begin{example}\label{e3.14} \rm  Now we compute   path homology groups 
$H^{\bold e(q)}_*(G)$ of density  $q=1,2,3$ with coefficients in $\mathbb R$ of the  
 directed hypergraph $G$ with 
   $
V_G=\{1,2, 3, 4,5,6\}, \ \ E_G=\{\bold e_1,\bold  e_2,\bold  e_3, \bold e_4, \bold e_5\},
$
 where 
 $
\bold e_1=(\{1\}\to \{2\}), \bold e_2=(\{1\}\to \{3\}), \bold e_3=(\{2\}\to \{4,6\}), 
\bold e_4=(\{3\}\to \{5\}), \bold e_5=(\{4\}\to \{5,6\})
$.
Then  the hypergraph $\mathfrak E(G)$ has the set of vertices
$V_G^e=\{1,2, 3, 4,5,6\}$ and the set of hyperedges 
$$
E_G^e=\left\{ 
\bold e_1^{\prime}=\{1,2\}, 
\bold e_2^{\prime}=\{1, 3\}, 
\bold e_3^{\prime}=\{2,4,6\}, 
\bold e_4^{\prime}=\{3, 5\},
 \bold e_5^{\prime}=\{4,5,6\}\right\}.
$$ 
In the case of $q=1$, the homology groups $H_*^{\mathbf e}(G)$ 
coincide with the path homology group of the complete  digraph 
$D=(V_D, E_D)$ which has six vertices and for every two vertices 
$v,w\in V_D$  there are two arrows $(v\to w), (w\to v)\in E_D$. 
This digraph is contractible, and hence, see  \cite[S3.3]{MiHomotopy}, 
$
H_0^{\bold e}(G)= \mathbb R$ and groups $H_i^{\bold e}(G)$ are trivial for $i\geq 1$.
\begin{figure}[th]\label{fig2}
\centering
\begin{tikzpicture}
\node (1) at (5,3) {$1$};
\node (3) at (7,3) {$3$};
\node (5) at (9,3) {$5$};
\draw (1) edge[ thick, ->] (3)
(3) edge[ thick, ->] (1)
 (3) edge[ thick, ->] (5)
(5) edge[ thick, ->] (3);
\node (2) at (5,5) {$2$};
\node (4) at (7,5) {$4$};
\node (6) at (9,5) {$6$};
\draw  (4) edge[ thick, ->] (6);
\draw  (6) edge[ thick, ->] (4)
(2) edge[ thick, ->] (4)
(4) edge[ thick, ->] (2)
(5) edge[ thick, ->] (4)
(4) edge[ thick, ->] (5)
 (1) edge[ thick, ->] (2)
 (2) edge[ thick, ->] (1)
 (5) edge[ thick, ->] (6) 
 (6) edge[ thick, ->] (5) 
(2) edge[bend left,  thick, ->] node [right]{} (6) 
(6) edge[bend right,  thick, ->] node [right]{} (2);
\end{tikzpicture}
\caption{The digraph $D_2$ for $q=2$.}
\end{figure}
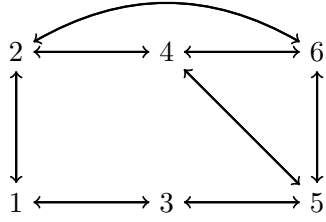

\noindent
If $q=2$,  homology groups $H_*^{\mathbf e(2)}(G)$ 
coincide with path homology group of the  digraph $D_2$ on Fig. 2,  where  two-sided arrow   
$a\longleftrightarrow b$ means that there are arrows 
$a\to b$ and $b\to a$.  The digraph $D$ is homotopy equivalent to the induced 
sub-digraph $D_2^{\prime}\subset D_2$ with the set of vertices $\{1,2,3,4,5\}$. 
 We  compute directly the path homology of 
$D_2^{\prime}$ and we obtain  
$ 
H_0^{\bold e(2)}(G)=H_1^{\bold e(2)}(G)=\mathbb R$ and  trivial groups 
$H_i^{\bold e}(G)$  for $i\geq 2$.

Now we consider the case of $\mathbf e(3)$.
We have  $\Omega^{\mathbf e(3)}_n=\Omega^{\mathbf e(2)}_n$  for $n=0,1$ and 
this equality is also true  for all $n\geq 0$.  We have 
$$
 {\mathcal{R}_2^{\mathbf e(3)}}^{reg}= A \oplus A_{246}\oplus A_{456},
 $$
where $A=\langle e_{121}, 
e_{212}, e_{131}, e_{313}, e_{353},e_{535}\rangle$ and
$A_{abc}$ is  the module generated by all regular paths with three vertices  
in the full digraph with vertices $a,b,c$.
Hence $\Omega_2^{\mathbf e(3)}={\mathcal{R}_2^{\mathbf e(3)}}^{reg}$.  
Considering  the digraph $D_2$, 
we obtain that  $\Omega_2^{\mathbf e(3)}= 
\Omega_2^{\mathbf e(2)}$. The cases with 
$n\geq 4$ are similar and  $\Omega_n^{\mathbf e(3)}= 
\Omega_n^{\mathbf e(2)}$ for $n\geq 4$.  Hence,  
$ 
H_n^{\bold e(2)}(G)=H_n^{\bold e(3)}(G)$ for $n\geq 0$. 
\end{example}
\begin{proposition}\label{p3.15} Let $G=(V,E)$ be a directed hypergraph, 
$I_1=(0\to 1)$,
and $ I=(V_I, E_I)$ be the hypergraph  
with the set of vertices $V_I=\{0,1\}$ and the set of edges  
$ E_I=\{ \bold e_0^{\prime}=\{0\}, \bold e_1^{\prime}=\{1\}, \bold e_2^{\prime}=\{0,1\}\}$. 
There is a natural inclusion  of path complexes
\begin{equation}\label{3.6}
\mathfrak H^{ q}[\mathfrak{E}(G\Box I_1)] \subset \mathfrak H^{ q}[\mathfrak{E}(G)\times I]
\end{equation}
for $q\geq 2$. Moreover,  in general case  complexes in (\ref{3.6}) are not equal. 
\end{proposition}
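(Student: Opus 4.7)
The strategy is to identify the vertex sets of both hypergraphs appearing in (\ref{3.6}) with $V\times\{0,1\}$ and to verify that every hyperedge of $\mathfrak{E}(G\Box I_1)$ is contained as a subset in some hyperedge of $\mathfrak{E}(G)\times I$. Since a path belongs to $\mathfrak{H}^q$ of a hypergraph iff every $q$ consecutive vertices fit inside a single hyperedge, this set-theoretic containment at the level of hyperedges immediately yields the desired inclusion of path complexes.

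First I would compute the hyperedges of $\mathfrak{E}(G\Box I_1)$ explicitly from Definition \ref{d2.3} and formula (\ref{3.5}): they split into the ``horizontal'' blocks $(A\cup B)\times\{i\}$, one for each $(A\to B)\in E_G$ and $i\in\{0,1\}$, and the ``vertical'' blocks $C\times\{0,1\}$, one for each $C\in \mathbf{P}_{01}(G)$. Next I would describe the hyperedges of $\mathfrak{E}(G)\times I$ via the product of hypergraphs from \cite{Hyper}; in particular, for each $e=A\cup B\in E(\mathfrak{E}(G))$ and each $f\in E_I=\{\{0\},\{1\},\{0,1\}\}$ the set $e\times f$ is a hyperedge. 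The horizontal blocks $(A\cup B)\times\{i\}$ appear verbatim on both sides, while any vertical block $C\times\{0,1\}$ with $C\in\mathbf{P}_{01}(G)$ satisfies $C\subset A\cup B$ for some $(A\to B)\in E_G$, and hence $C\times\{0,1\}\subset(A\cup B)\times\{0,1\}$, which is a hyperedge of $\mathfrak{E}(G)\times I$. This gives the inclusion in (\ref{3.6}) and also the ``naturality'' statement, since the identification of vertex sets is canonical and the containment commutes with morphisms of directed hypergraphs.

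For strictness I would exhibit a small explicit example. Take a directed hypergraph $G$ containing an arrow $(A\to B)$ with $|A|\geq 1$ and $|B|\geq 2$; the arrow $\mathbf{e}_4=(\{1\}\to\{2,3\})$ of the hypergraph in Example \ref{e3.4} will do. Choose $v\in A$ and $w\in B$ and consider the $1$-path $p=((v,0),(w,1))$ on $V\times\{0,1\}$. The pair $\{(v,0),(w,1)\}$ lies in the hyperedge $(A\cup B)\times\{0,1\}$ of $\mathfrak{E}(G)\times I$, so $p\in \mathfrak{H}^{q}[\mathfrak{E}(G)\times I]$ for every $q\geq 2$. However, the only hyperedges of $\mathfrak{E}(G\Box I_1)$ that contain $(v,0)$ are of the form $(A'\cup B')\times\{0\}$ or $A'\times\{0,1\}$ with $v\in A'\cup B'$ respectively $v\in A'$, and neither of these contains any vertex of the form $(w,1)$ with $w\neq v$; hence $p\notin\mathfrak{H}^{q}[\mathfrak{E}(G\Box I_1)]$, so the inclusion in (\ref{3.6}) is strict.

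The main obstacle is the bookkeeping: one must make sure that the hyperedges of $\mathfrak{E}(G\Box I_1)$ coming from the two kinds of arrows in Definition \ref{d2.3} are handled uniformly and that the containment into hyperedges of $\mathfrak{E}(G)\times I$ uses the product from \cite{Hyper} correctly. Once the two lists of hyperedges are in hand, the proof of inclusion reduces to an elementary subset check, and the proof of strictness reduces to producing a single ``diagonal'' pair $(v,0),(w,1)$ with $v\neq w$ that is accommodated by a mixed hyperedge $e\times\{0,1\}$ of $\mathfrak{E}(G)\times I$ but not by any hyperedge of $\mathfrak{E}(G\Box I_1)$.
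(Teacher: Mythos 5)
Your computation of the two edge lists and the derivation of the inclusion follow the same route as the paper: the paper writes out $E_{\mathfrak E(G\Box I_1)}=E_0^{\prime}\cup E_1^{\prime}\cup E_{01}^{\prime}$ and $E_{\mathfrak E(G)\times I}=E_0^{\prime\prime}\cup E_1^{\prime\prime}\cup E_{01}^{\prime\prime}$ exactly as you do and compares them termwise. Your handling of the vertical edges is in fact slightly cleaner: since a window of $q$ consecutive vertices only needs to be a \emph{subset} of some hyperedge, it is enough that $C\times\{0,1\}\subset (A\cup B)\times\{0,1\}$ for some arrow $A\to B$ having $C$ as origin or end, and you do not need $C\times\{0,1\}$ itself to be an edge of the product (which, under the projection conditions $p_1(A)=C\cup D$, $p_2(A)=\{0,1\}$ recalled in the paper, it generally is not). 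So the inclusion part is fine.

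The strictness argument, however, contains a false step. You claim that a vertical hyperedge $A^{\prime}\times\{0,1\}$ of $\mathfrak E(G\Box I_1)$ with $v\in A^{\prime}$ ``contains no vertex of the form $(w,1)$ with $w\neq v$.'' This is wrong: $A^{\prime}\times\{0,1\}$ contains $(w,1)$ for \emph{every} $w\in A^{\prime}$. Consequently your recipe ``any arrow $(A\to B)$ with $|B|\geq 2$, any $v\in A$, $w\in B$'' does not work in general: if some set $A^{\prime}\in\mathbf P_{01}(G)$ happens to contain both $v$ and $w$ (e.g.\ if $G$ also had an arrow with origin $\{v,w\}$), then the path $((v,0),(w,1))$ does lie in $\mathfrak H^{2}[\mathfrak E(G\Box I_1)]$ and is not a counterexample. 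The missing hypothesis is precisely the one the paper imposes: one must choose $v\in C$, $w\in D$ for an arrow $C\to D$ such that the pair $\{v,w\}$ is \emph{not} contained in any member of $\mathbf P_{01}(G)$. Your concrete instance does survive this check --- for the hypergraph of Example \ref{e3.4} one has $\mathbf P_{01}(G)=\{\{1\},\{2\},\{3\},\{4\},\{2,3\},\{3,4\}\}$, and no member contains $1$ together with $2$ or $3$ --- but that verification has to be made explicitly; it does not follow from the shape of the hyperedges as you assert.
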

\begin{proof} Recall that the product $"\times"$ of hypergraphs is defined in \cite{mor}, \cite{Hyper}. 
The directed hypergraph  
 $G\Box I_1=(V_{G\Box I_1}, E_{G\Box I_1}) $ has the set of vertices  
 $V_{G\Box I_1}=V\times \{0,1\}$ and the set of edges that can be presented as the  union  
 $E_0\cup E_1\cup E_{01}$ of three pairwise disjoint sets 
\begin{equation*}
\begin{matrix}
E_0=\{(C\times \{0\}\to  D\times \{0\})| (C\to D)\in E\},
\\
E_1=\{(C\times \{1\}\to  D\times \{1\})| (C\to D)\in E\},
\\
E_{01}=\{A\times\{0\}\to A\times\{1\}| A\subset {\mathbf{P}}_{01}(G) \}.\ \ \ \  \\
\end{matrix}
\end{equation*}
Hence,  the hypergraph 
$
\mathfrak E(G\Box I_1)=\left( V_{\mathfrak E(G\Box I_1)}, E_{\mathfrak E(G\Box I_1)}  \right)
$
has the set of vertices  $V_{\mathfrak E(G\Box I_1)}=  V_{G\Box I_1}=V\times \{0,1\}$ and the set of edges  
that can be presented as a union of three pairwise disjoint sets $E_0^{\prime}\cup E_1^{\prime}\cup E_{01}^{\prime}$ where
\begin{equation}\label{3.7}
\begin{matrix}
E_0^{\prime}=\{(C\times \{0\})\cup (D\times \{0\})| (C\to D)\in E\},
\\
E_1^{\prime}=\{(C\times \{1\})\cup  (D\times \{1\})| (C\to D)\in E\},
\\
E_{01}^{\prime}=\{(A\times\{0\})\cup( A\times\{1\})| A\subset {\mathbf{P}}_{01}(G) \}.
\ \     \\
\end{matrix}
\end{equation}
By definition of a hypergraph  $\mathfrak{E}(G)=(V^e, E^e)$,   we obtain that the hypergraph 
 $\mathfrak{E}(G)\times I=(V_{\mathfrak{E}(G)\times I}, E_{\mathfrak{E}(G)\times I}) $ has the set of vertices 
$V_{\mathfrak{E}(G)\times I}=V\times \{0,1\}$ and the set of edges that can be presented as  
a union of three pairwise disjoint sets 
$E_0^{\prime\prime}\cup E_1^{\prime\prime}\cup E_{01}^{\prime\prime}$ where
\begin{equation}\label{3.8}
\begin{matrix}
E_0^{\prime\prime}=\{(C\cup D) \times \{0\}, C\cup D, \{0\})\, | \,(C\to D)\in E\},\\
E_1^{\prime\prime}=\{(C\cup D) \times \{1\}, C\cup D, \{0\})\, |\,  (C\to D)\in E\},\\
E_{01}^{\prime\prime}=\{(A, C\cup D, \{0,1\})\, | \, (C\to D)\in E, A\subset (C\cup D) \times \{0,1\} \}.\\
\end{matrix}
\end{equation}
Let $p_1\colon V\times \{0,1\}\to V,  \,
p_2\colon V\times \{0,1\}\to \{0,1\}$ be natural projections. 
Then  $p_1(A)=C\cup D$ and  $p_2(A)=\{0,1\}$ by definition of the product of hypergraphs.  
Thus,  path complexes $\mathfrak H^{q}\left[\mathfrak E(\Box I_1)\right]$  and  
$\mathfrak H^{ q}\left[\mathfrak E(G)\times  I\right]$ have the same vertex set   and,  by  (\ref{3.7})  and (\ref{3.8}), 
$
E_0^{\prime}=E_0^{\prime\prime},\ 
 E_1^{\prime}= E_1^{\prime\prime}, \ E_{01}^{\prime}\subset E_{01}^{\prime\prime}
$
and (\ref{3.6}) follows. 

Now we prove that in general case  of (\ref{3.6}) there is no equality. 
Let $(C\to  D)\in E$ be a directed edge 
and $v\in C, w\in D$ be such vertices that the pair $(v,w)$ does not lie in a set $A\in {\mathbf{P}}_{01}(G)$. 
Then  the two vertex  path $(v\times\{0\}), 
(w\times\{1\})\in\mathfrak H^{q}\left[\mathfrak E(G)\times  I\right]$  for $ q=2$ lies in
$E_{01}^{\prime\prime}$ and does not lie in  $E_0^{\prime}\cup
 E_1^{\prime}\cup E_{01}^{\prime}$.
\end{proof}
\begin{lemma}\label{l3.16}  Let $G=(V,E)$ be a directed hypergraph,
$I_1=(0\to 1)$.
There is the  inclusion 
$
\mu\colon [\mathfrak H^{ 2}\circ\mathfrak{E}(G)]^{\uparrow}\to 
\mathfrak{H}^{2}\circ \mathfrak E(G\Box I_1)
$
of path complexes. 
\end{lemma}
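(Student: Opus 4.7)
The plan is to take $\mu$ to be the identity on vertices under the standard identification $V \cup V' \leftrightarrow V \times \{0,1\}$ (sending $i \mapsto (i,0)$ and $i' \mapsto (i,1)$) used already in Section \ref{S2}, and then to verify that each of the three generating families of paths in $[\mathfrak H^{2} \circ \mathfrak E(G)]^{\uparrow}$ lands in $\mathfrak H^{2} \circ \mathfrak E(G \Box I_1)$. Because density $2$ is a purely pairwise constraint, it suffices to check for each such path that every pair of consecutive vertices lies in some hyperedge of $\mathfrak E(G \Box I_1)$, whose edge set has the three-part explicit description $E_0' \cup E_1' \cup E_{01}'$ recorded in (\ref{3.7}).

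First I would dispose of the two horizontal cases. If $q = (i_0 \dots i_n) \in P^{2}_{\mathfrak E(G)}$, then for every consecutive pair $(i_\ell, i_{\ell+1})$ there exists $(C \to D) \in E$ with both vertices in $C \cup D$. Identifying $q$ with its image in $V \times \{0\}$, the pair $((i_\ell, 0), (i_{\ell+1}, 0))$ lies in $(C \cup D) \times \{0\} \in E_0'$. The analogous argument with $V \times \{1\}$ and $E_1'$ handles any path in $[P^{2}_{\mathfrak E(G)}]'$.

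The substantive case is $q_k^{\#} = (i_0 \dots i_k i_k' i_{k+1}' \dots i_n')$. All consecutive pairs other than the jump $i_k \to i_k'$ are covered by the horizontal cases above. For the jump, the pair $((i_k, 0), (i_k, 1))$ must lie in some hyperedge $A \times \{0,1\} \in E_{01}'$ with $A \in \mathbf P_{01}(G)$ and $i_k \in A$. By Definition \ref{d2.1}(ii) we have $V = \bigcup_{(A_i \to B_i) \in E}(A_i \cup B_i)$, so $i_k$ belongs to some origin set $A_i$ or some end set $B_i$, and that set is by construction an element of $\mathbf P_{0}(G) \cup \mathbf P_1(G) = \mathbf P_{01}(G)$. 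Thus the required hyperedge exists, and the density-$2$ condition is verified throughout $q_k^{\#}$.

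The main obstacle is really only this last bookkeeping step at the jump, where one must invoke the covering property of $V$ to produce an element of $\mathbf P_{01}(G)$ that contains $i_k$; the rest is transparent from (\ref{3.7}). Since every path in $[\mathfrak H^{2} \circ \mathfrak E(G)]^{\uparrow}$ is of one of the three types listed (by the same case analysis that appeared in the proof of Lemma \ref{l3.5}), the vertex-level map extends to a well-defined inclusion $\mu$ of path complexes, as required.
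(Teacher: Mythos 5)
Your proof is correct and follows essentially the same route as the paper: identify the vertex sets, split the paths of $[\mathfrak H^{2}\circ\mathfrak{E}(G)]^{\uparrow}$ into the three families $P^2_{\mathfrak E(G)}$, $[P^2_{\mathfrak E(G)}]^{\prime}$, $[P^2_{\mathfrak E(G)}]^{\#}$, and check each consecutive pair against the edge decomposition $E_0^{\prime}\cup E_1^{\prime}\cup E_{01}^{\prime}$ of (\ref{3.7}). If anything, you are more explicit than the paper at the jump $(i_k,i_k^{\prime})$, where the paper merely asserts membership in $E_0^{\prime}\cup E_1^{\prime}\cup E_{01}^{\prime}$ while you correctly justify it via the covering condition $V=\bigcup(A_i\cup B_i)$ and the definition of $\mathbf P_{01}(G)$.
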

\begin{proof} By definition of the hypergraph  $\mathfrak{E}(G)=(V^e, E^e)$ and  the functor $\mathfrak H^2$,   
	we obtain that the set  $P_{\mathfrak{E}(G)}^2$ of $\mathfrak H^{ 2}\circ\mathfrak{E}(G)$ consists 
of  paths $p=(i_0\dots i_n)$ on the set $V$ such that every two 
consequent vertices $i_s,i_{s+1}\in p$ lie in a set 
$\{C\cup D\, | \, (C\to D)\in E\}$. By definition,  the set of paths of  the path
complex  $[\mathfrak H^{ 2}\circ\mathfrak{E}(G)]^{\uparrow}$
is a union of paths 
\begin{equation}\label{3.9}
P_{\mathfrak{E}(G)}^2\cup [P_{\mathfrak{E}(G)}^2]^{\prime}\cup {[P_{\mathfrak{E}(G)}^2]}^{\#}
\end{equation}
on the set $V\times\{0,1\}=V\cup V^{\prime}$. 
A path $p=(i_0\dots i_n)$ on the set $V\times \{0,1\}$ lies in  
$P_{\mathfrak E(G\Box I_1)}^2$ if any two consequent vertices lie in the exactly one of the sets 
$
E_0^{\prime}, E_1^{\prime}, E_{01}^{\prime}
$
 defined in (\ref{3.7}).  From definition of the functor $\mathfrak{E}$, we conclude that  in (\ref{3.9}) 
 any pair of consequent vertices of a path  from $P_{\mathfrak{E}(G)}^2$ lies in an  edge from $E_0^{\prime}$, 
any pair of consequent vertices of a path from $[P_{\mathfrak{E}(G)}^2]^{\prime}$ lies in an  edge from 
$E_1^{\prime}$, and any pair of consequent vertices of a path from 
${[P_{\mathfrak{E}(G)}^2]}^{\#}$  lies in an  edge from $E_0^{\prime}\cup  E_1^{\prime}\cup E_{01}^{\prime}$.  
\end{proof} 
\begin{theorem}\label{t3.17}  For a directed hypergraph $G$,  
the non-directed  path homology groups $H^{\bold e(2)}_*(G) $
  of density two  are homotopy invariant.
\end{theorem}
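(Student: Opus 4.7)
The plan is to follow the template established by Theorems \ref{t3.6} and \ref{t3.11}. By Definition \ref{d2.4}(ii), an arbitrary homotopy decomposes into a chain of one-step homotopies, so it suffices to establish invariance under a single one-step homotopy. Let $f_0, f_1\colon G\to H$ be morphisms witnessed by $F\colon G\Box I_1\to H$ with $I_1=(0\to 1)$ and $F|_{G\Box\{i\}}=f_i$.

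By Proposition \ref{p3.12} and the functoriality of $\mathfrak{H}^{2}$ recalled from \cite{Hyper}, applying the composite $\mathfrak{H}^{2}\circ\mathfrak{E}$ produces morphisms of path complexes
$$
\mathfrak{H}^{2}\circ\mathfrak{E}(f_i)\colon \mathfrak{H}^{2}\circ\mathfrak{E}(G)\to \mathfrak{H}^{2}\circ\mathfrak{E}(H)\quad (i=0,1),
$$
together with $\mathfrak{H}^{2}\circ\mathfrak{E}(F)\colon \mathfrak{H}^{2}\circ\mathfrak{E}(G\Box I_1)\to \mathfrak{H}^{2}\circ\mathfrak{E}(H)$. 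I would then form the composition
$$
[\mathfrak{H}^{2}\circ\mathfrak{E}(G)]^{\uparrow}
\xrightarrow{\ \mu\ }
\mathfrak{H}^{2}\circ\mathfrak{E}(G\Box I_1)
\xrightarrow{\ \mathfrak{H}^{2}\circ\mathfrak{E}(F)\ }
\mathfrak{H}^{2}\circ\mathfrak{E}(H),
$$
where $\mu$ is the inclusion of path complexes supplied by Lemma \ref{l3.16}, and argue that this composition furnishes precisely a path-complex homotopy between $\mathfrak{H}^{2}\circ\mathfrak{E}(f_0)$ and $\mathfrak{H}^{2}\circ\mathfrak{E}(f_1)$ in the sense required by \cite[Th. 3.4]{Hyper}.

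The main obstacle, and the only step that is not purely formal, is the verification — analogous to the final sentence of Lemma \ref{l3.10} — that $\mu$ restricts on the two ``slabs'' $P_{\mathfrak{E}(G)}^{2}$ and $[P_{\mathfrak{E}(G)}^{2}]^{\prime}$ to the natural identifications with $\mathfrak{H}^{2}\circ\mathfrak{E}(G\Box\{0\})$ and $\mathfrak{H}^{2}\circ\mathfrak{E}(G\Box\{1\})$ respectively. This compatibility is implicit in the construction of $\mu$ carried out in Lemma \ref{l3.16}: a path in $P^{2}_{\mathfrak{E}(G)}$ is realized inside the edges $E_{0}^{\prime}$, its primed copy inside $E_{1}^{\prime}$, and the $\#$-paths inside the full union $E_{0}^{\prime}\cup E_{1}^{\prime}\cup E_{01}^{\prime}$ appearing in (\ref{3.7}). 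Once this is spelled out, composing with $\mathfrak{H}^{2}\circ\mathfrak{E}(F)$ and restricting to top and bottom recovers $\mathfrak{H}^{2}\circ\mathfrak{E}(f_1)$ and $\mathfrak{H}^{2}\circ\mathfrak{E}(f_0)$, and the homotopy invariance follows from \cite[Th. 3.4]{Hyper}, mirroring the proof of Theorem \ref{t3.11}.
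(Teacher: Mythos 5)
Your proposal matches the paper's own proof of Theorem \ref{t3.17} essentially verbatim: reduce to a one-step homotopy, apply the functor $\mathfrak{H}^{2}\circ\mathfrak{E}$, compose the inclusion $\mu$ of Lemma \ref{l3.16} with $\mathfrak{H}^{2}\circ\mathfrak{E}(F)$, and conclude via \cite[Th. 3.4]{Hyper}. Your additional remark on checking that $\mu$ restricts to the natural identifications on the top and bottom slabs is a correct and worthwhile elaboration of a point the paper leaves implicit.
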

\begin{proof} It is sufficient to prove 
 homotopy invariance for the  one-step homotopy. 
Let $f_0, \, f_1\colon G\to H$ be  one-step homotopic morphisms of directed hypergraphs with a homotopy 
$F\colon G\Box I_1\to H$.  Since $\mathfrak H^{2}\circ \mathfrak E$ is a functor,  we obtain
morphisms of path complexes
$
\mathfrak H^{2}\circ \mathfrak E(f_0), \, 
\mathfrak H^{2}\circ \mathfrak E(f_1)\colon
 \mathfrak H^{2}\circ \mathfrak E(G)\to \mathfrak H^{2}\circ \mathfrak E(H), 
$
and 
$
\mathfrak H^{2}\circ \mathfrak E(F)\colon
\mathfrak H^{2}\circ \mathfrak E(G\Box I_1)\to 
\mathfrak H^{2}\circ \mathfrak E(H)
$.
 Using Lemma \ref{l3.16},  we can consider the composition 
$$
[\mathfrak H^{2}\circ \mathfrak E(G)]^{\uparrow}\overset{\mu}
{\longrightarrow} \mathfrak H^{2}\circ \mathfrak E(G\Box I_1)\overset{\mathfrak H^{2}\circ \mathfrak E(F)}{\longrightarrow}
\mathfrak H^{2}\circ \mathfrak E(H)
$$
which gives a homotopy between morphisms 
$ \mathfrak H^{2}\circ \mathfrak E(f_0)$ and 
$
\mathfrak H^{2}\circ \mathfrak E(f_1)$.
Now the result follows from 
\cite[Th. 3.4]{Hyper}.
\end{proof}

\subsection{Natural path homology}\label{S34}

Let $G=(V,E)$ be a directed hypergraph.
 Define a digraph 
$
\mathfrak N(G)= (V^n_G, E^n_G)
$
where  
$
V^n_G=\{ C\in {\mathbf{P}}(V)\setminus \emptyset | C\in {\mathbf{P}}_{01}(G)\}
$
  and   
$
E^n_G=\{A\to B| (A\to B)\in E\}.
$
 Thus a set  $X\in  {\mathbf{P}}(V)\setminus \emptyset $ is a  vertex of  the digraph  
 $\mathfrak N(G)$ iff  $X$  is an  origin or an end  of an arrow $\mathbf e\in E$.
Any arrow $\bold e=(A\to B)\in E$  gives
an  arrow $(A\to B)\in E^n$.

 \begin{proposition}\label{p3.18} Every morphism of directed hypergraphs $f\colon G\to H$ 
defines a morphism  of digraphs 
$
[\mathfrak N(f)]=(f_{V}^n, f_{E}^n) \colon (V_G^n,E^n_G)\to (V_H^n, E^n_H)
$ 
by
 $
f^n_V(C) \colon =[{\mathbf{P}}(f)]\circ \phi_G(C)
$
and 
$
f_E^n(A\to B) =(f_V(A)\to f_V(B))\in E^n_H
$. Moreover,     $\mathfrak N$ is a functor  from  the category $\mathcal{DH}$ 
to the  category  $\mathcal D$  of  digraphs. \ \ \  $\blacksquare$
\end{proposition}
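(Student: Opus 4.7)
The plan is to verify in order: (a) the vertex map $f_V^n$ sends $V_G^n$ into $V_H^n$; (b) the edge map $f_E^n$ sends $E_G^n$ into $E_H^n$, is compatible with the source/target in $\mathfrak N(H)$, and produces no loop; and (c) the assignment $\mathfrak N$ preserves identities and compositions. Throughout, the workhorse is the commutative square of Definition \ref{d2.2},
\begin{equation*}
\varphi_H\circ f_E \;=\; \mathbb P(f_V)\circ \varphi_G,
\end{equation*}
which forces $f_E(A\to B)=(f_V(A)\to f_V(B))$ for every $(A\to B)\in E_G$.

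For (a), I take $C\in V_G^n=\mathbf{P}_{01}(G)$, so $C$ is the origin or the end of some arrow $(A\to B)\in E_G$. Reading the formula in the statement as $f_V^n(C)=[\mathbf P(f_V)](C)=f_V(C)$, the identity above shows that $f_V(A)$ and $f_V(B)$ are, respectively, the origin and the end of $f_E(A\to B)\in E_H$, so $f_V(C)\in\mathbf P_{01}(H)=V_H^n$. For (b), any $(A\to B)\in E_G^n$ lies in $E_G$ by the definition of $E_G^n$, and the same computation yields $(f_V(A)\to f_V(B))=f_E(A\to B)\in E_H$, which is exactly $(f_V^n(A)\to f_V^n(B))\in E_H^n$; source/target compatibility is thus built into the definition.

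The one point that is not purely bookkeeping, and which I would flag as the main obstacle, is the \emph{no-loop} condition: since $\mathcal D$ consists of loopless digraphs, I must check that $f_V^n(A)\neq f_V^n(B)$ for each $(A\to B)\in E_G^n$. Injectivity of $f_V$ is \emph{not} assumed (see e.g. Example \ref{e3.2}(ii), where the map collapses vertices), so the argument has to go through the target. The point is that the arrow $f_E(A\to B)\in E_H$, being an arrow of a directed hypergraph, has by Definition \ref{d2.1}(ii) disjoint non-empty origin and end; those sets are precisely $f_V(A)$ and $f_V(B)$, so $f_V(A)\cap f_V(B)=\emptyset$ and a fortiori $f_V^n(A)\neq f_V^n(B)$.

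Finally, for (c) both components of $\mathfrak N(f)$ are restrictions of constructions already known to be functorial ($\mathbf P(f_V)$ on the vertex side, $f_E$ on the edge side), and composition in $\mathcal{DH}$ is defined componentwise; hence $\mathfrak N(\operatorname{Id}_G)=\operatorname{Id}_{\mathfrak N(G)}$ and $\mathfrak N(g\circ f)=\mathfrak N(g)\circ \mathfrak N(f)$ follow immediately from the functoriality of the power-set construction.
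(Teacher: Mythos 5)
Your proposal is correct. Note that the paper offers no proof of Proposition \ref{p3.18} at all --- the terminal $\blacksquare$ in the statement marks it as evident --- so there is nothing to compare against beyond the routine verification you have supplied. Your reading of the vertex formula as $f_V^n(C)=[\mathbf{P}(f_V)](C)$ (the symbol $\phi_G$ in the statement is evidently a slip for the directed-hypergraph structure map $\varphi_G$ followed by a projection, or simply for the identity on $\mathbf{P}_{01}(G)$) is the intended one, and your treatment of the one non-trivial point --- that the image arrow has no loop because, by Definition \ref{d2.1}(ii), the origin and end of the arrow $f_E(A\to B)\in E_H$ are disjoint non-empty sets which the commutative square of Definition \ref{d2.2} identifies with $f_V(A)$ and $f_V(B)$ --- is exactly the observation needed for $\mathfrak N$ to land in the loopless category $\mathcal D$. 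The functoriality argument via $\mathbf{P}(g_V\circ f_V)=\mathbf{P}(g_V)\circ\mathbf{P}(f_V)$ is likewise what the authors must have had in mind.
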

The composition $\mathfrak D\circ \mathfrak N$  gives a functor from  
 $\mathcal{DH}$  to the category of path complexes.   
For a  directed hypergraph $G$,  we set 
$
H_*^{\bold n}(G)\colon = H_*(\mathfrak D\circ \mathfrak N(G))
$.
These homology groups   will be called the \emph{natural path homology groups}  
of   $G$.  
\begin{example}\label{e3.19} 
\rm  Now we compute  
$H^{\bold{n}}_*(G)$ with  coefficients in $\mathbb R$  of   directed hypergraph  $G=(V_G,E_G)$:
   $$
V_G=\{1,2, 3, 4,5,6,7,8\}, \ \ E_G=\{\bold e_1,\bold  e_2,\bold  e_3, \bold e_4, \bold e_5, \bold e_6,\bold e_7, \bold e_8, \bold e_9\},
$$
 $$
\bold e_1=(\{1\}\to \{3,4\}), \bold e_2=(\{1\}\to \{5, 6\}), \bold e_3=(\{1\}\to \{7,8\}), 
$$
$$
\bold e_4=(\{2\}\to \{3,4\}), \bold e_5=(\{2\}\to \{5,6\}), \bold e_6=(\{2\}\to \{7,8\}), 
$$ 
$$
\bold e_7=(\{3,4\}\to \{5,6\}), \bold e_8=(\{5,6\}\to \{7,8\}), \bold e_9=(\{7,8\}\to \{3,4\}).
$$
The groups $H_*^{\bold n}(G)$ coincide
with the regular path  homology groups of the digraph 
given on  Fig. 3. 

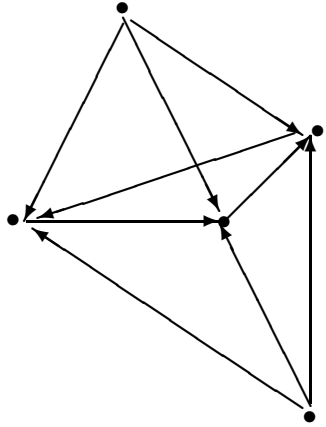
\begin{figure}[h]\label{fig3}
\setlength{\unitlength}{0.14in} \centering
\begin{picture}(16,18)(2,2)
\centering
\thicklines
 \put(5,12){\vector(1,0){7.2}}
\put(4.2,11.8){$\bullet$}
\put(12.1,11.7){$\bullet$}
\put(12.4,12){\vector(1,1){3.2}}
\put(15,15.3){\vector(-3,-1){9.6}}
\put(8.3, 19.7){$\bullet$}
 \put(15.6,15.1){$\bullet$}
\put(8.6, 19.4){\vector(-1,-2){3.7}}
\put(8.6, 19.6){\vector(1,-2){3.6}}
\put(8.9, 19.5){\vector(3,-2){6.4}}
\put(15.3, 4.4){$\bullet$}
\put(15.6, 5.1){\vector(-1,2){3.4}}
\put(15.3, 5){\vector(-3,2){10.1}}
\put(15.6, 5.2){\vector(0,1){10}}
\end{picture}
\caption{The digraph of Example 3.19.}
\end{figure}
\noindent 
Computation gives: 
$
H_0^{\mathbf n}(G)=H_2^{\mathbf n}(G)=\mathbb R$ and 
$H_i^{\mathbf n}(G)=0$ for other $i$.
\end{example}
\begin{lemma}\label{l3.20}  Let $G=(V,E)$ be a directed hypergraph and 
$I_1=(0\to 1)$. 
There is  an equality   
$
\mathfrak N(G)\Box I_1=
\mathfrak{N}(G\Box I_1)
$
of digraphs.
\end{lemma}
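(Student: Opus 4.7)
The plan is to unfold both sides of the claimed equality directly from the definitions and exhibit the natural identification $(C,i) \leftrightarrow C \times \{i\}$ that turns one into the other. Since $\mathfrak{N}$ is defined vertex-set-then-arrow-set, it is enough to verify vertex sets and arrow sets separately.

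First, I would compute the right-hand side $\mathfrak{N}(G \Box I_1)$. By Definition \ref{d2.3}, the arrows of $G \Box I_1$ split into three families: $A\times\{0\} \to B\times\{0\}$ and $A\times\{1\} \to B\times\{1\}$ for $(A\to B)\in E$, plus $A\times\{0\} \to A\times\{1\}$ for $A \in \mathbf{P}_{01}(G)$. Collecting origins and ends gives
\[
\mathbf{P}_{01}(G \Box I_1) = \{C\times\{i\} \mid C \in \mathbf{P}_{01}(G),\ i \in \{0,1\}\},
\]
which is the vertex set of $\mathfrak{N}(G\Box I_1)$; its arrow set consists of the same three families listed above.

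Next, I would compute the left-hand side $\mathfrak{N}(G) \Box I_1$. By Proposition \ref{p3.18}, $\mathfrak{N}(G)$ is the digraph with vertex set $V_G^n = \mathbf{P}_{01}(G)$ and arrow set $E_G^n$. Viewed as a directed hypergraph in the standard way, each of its hyperedges has the form $\{A\}\to\{B\}$ (singleton to singleton) for $(A\to B)\in E$, and $\mathbf{P}_{01}(\mathfrak{N}(G)) = \{\{C\} \mid C \in V_G^n\}$. Applying Definition \ref{d2.3} again, the arrows of $\mathfrak{N}(G)\Box I_1$ are the singleton hyperedges $\{A\}\times\{i\} \to \{B\}\times\{i\}$ for $(A\to B) \in E_G^n$ and $i \in \{0,1\}$, together with $\{C\}\times\{0\}\to \{C\}\times\{1\}$ for $C \in V_G^n$. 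In other words, $\mathfrak{N}(G)\Box I_1$ is a digraph on $V_G^n \times \{0,1\}$ with arrows $(A,0)\to(B,0)$, $(A,1)\to(B,1)$ for $(A\to B)\in E_G^n$, and $(C,0)\to(C,1)$ for $C \in V_G^n$.

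Finally, the bijection $\psi\colon V_G^n \times \{0,1\} \to \mathbf{P}_{01}(G\Box I_1)$ defined by $\psi(C,i) = C \times \{i\}$ identifies the vertex sets, and sends each of the three arrow families on the left to the corresponding family on the right (using that $E_G^n = \{A \to B \mid (A\to B)\in E\}$ by the definition of $\mathfrak{N}$). This gives the equality of digraphs. The only obstacle is notational bookkeeping, namely keeping straight that vertices of $\mathfrak{N}(G)$ are themselves subsets of $V_G$ while Definition \ref{d2.3} builds hyperedges as Cartesian products of subsets; once one commits to the identification $(C,i)\leftrightarrow C\times\{i\}$, the verification is formal.
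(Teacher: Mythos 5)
Your proposal is correct and follows essentially the same route as the paper: both sides are unfolded directly from Definitions \ref{d2.3} and the definition of $\mathfrak N$, and the vertex sets and the three arrow families are matched under the identification $(C,i)\leftrightarrow C\times\{i\}$. You are in fact slightly more explicit than the paper about the bookkeeping point that $\mathfrak N(G)$ must first be regarded as a directed hypergraph with singleton arrows before forming the box product, which the paper's proof passes over silently.
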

\begin{proof}  The digraph 
$\mathfrak N(G)\Box I_1$  has 
$
V_{\mathfrak N(G)\Box I_1}=\left\{C\times \{0, 1\}|\, C\in {\mathbf{P}}_{01}(G)\right\}
$
and  $E_{\mathfrak N(G)\Box I_1}=E_{0,1}\cup E_{0\to 1}$ where 
$
E_{0,1}=\{A\times \{i\}\to B\times\{i\}|
\, A\to B\in E; i=0,1\}$, 
$E_{0\to 1}=\left\{C\times\{0\}\to C\times\{1\}|\, C\in {\mathbf{P}}_{01}(G)\right\}$. 
The digraph 
$\mathfrak N(G\Box I_1)$  has the set of vertices 
$
V_{\mathfrak N(G\Box I_1)}=\left\{C\times \{0, 1\}|\, C\in {\mathbf{P}}_{01}(G)\right\}
$
which  coincides with $V_{\mathfrak N(G)\Box I_1}$
and the set of edges $E_{\mathfrak N(G\Box I_1)}=E_{0,1}\cup E_{0\to 1}$ which  coincides with $E_{\mathfrak N(G)\Box I_1}$. 
\end{proof}

\begin{theorem}\label{t3.21} For a directed hypergraph $G$,  the natural path 
	homology groups $H^{\bold n}_*(G)$ are  homotopy invariant.
\end{theorem}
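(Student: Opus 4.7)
The plan is to mirror the structure of the proofs of Theorems 3.6 and 3.11 while exploiting the fact that Lemma 3.20 gives a strict equality rather than merely an inclusion of path complexes. This should make the argument somewhat shorter than the proofs of the two preceding homotopy invariance theorems.

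First I would reduce to the case of a one-step homotopy: by Definition 2.4(ii), a general homotopy is a finite chain of one-step homotopies, so it suffices to show that whenever $f_0, f_1 \colon G \to H$ are one-step homotopic morphisms of directed hypergraphs with homotopy $F \colon G \Box I_1 \to H$ (where $I_1 = (0 \to 1)$), the induced maps on $H^{\bold n}_*$ coincide.

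Next, I would apply the functor $\mathfrak{N}$ from Proposition \ref{p3.18} to obtain a digraph morphism $\mathfrak{N}(F) \colon \mathfrak{N}(G \Box I_1) \to \mathfrak{N}(H)$, and then use Lemma \ref{l3.20} to identify the domain with $\mathfrak{N}(G) \Box I_1$. Functoriality of $\mathfrak{N}$, combined with the fact that the inclusions of $G \Box \{0\}$ and $G \Box \{1\}$ into $G \Box I_1$ translate under $\mathfrak{N}$ to the inclusions of $\mathfrak{N}(G) \Box \{0\}$ and $\mathfrak{N}(G) \Box \{1\}$ into $\mathfrak{N}(G) \Box I_1$, shows that $\mathfrak{N}(F)$ is a one-step digraph homotopy between $\mathfrak{N}(f_0)$ and $\mathfrak{N}(f_1)$.

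The final step is to invoke the known homotopy invariance of the regular path homology of digraphs from \cite{Axioms}, \cite{Mi3}, and \cite{MiHomotopy}, giving $H_*(\mathfrak{D} \circ \mathfrak{N}(f_0)) = H_*(\mathfrak{D} \circ \mathfrak{N}(f_1))$, i.e., $H^{\bold n}_*(f_0) = H^{\bold n}_*(f_1)$. Alternatively, and perhaps more in the spirit of this paper, one can observe that for any digraph $D$ there is a natural isomorphism $\mathfrak{D}(D \Box I_1) \cong [\mathfrak{D}(D)]^{\uparrow}$ (the digraph analog of Lemma \ref{l3.5}, which is immediate because a digraph is a directed hypergraph with singleton sources and targets), apply it to $D = \mathfrak{N}(G)$, and then conclude by \cite[Th. 3.4]{Hyper}. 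Unlike the proofs of Theorems \ref{t3.11} and \ref{t3.17}, there is no real obstacle here: the strict equality in Lemma \ref{l3.20} removes the need to construct any analog of the inclusions $\lambda$ and $\mu$ appearing in Lemmas \ref{l3.10} and \ref{l3.16}, so the entire argument reduces to a routine diagram chase after $\mathfrak{N}$ is applied.
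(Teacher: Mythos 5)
Your proposal is correct and follows essentially the same route as the paper: the paper's own proof is a one-line appeal to Lemma \ref{l3.20} together with the homotopy invariance of path homology on the category of digraphs, and your argument simply fills in the routine details (reduction to a one-step homotopy, checking that $\mathfrak{N}(F)$ restricts correctly on the two ends of $\mathfrak{N}(G)\Box I_1$). Your observation that the strict equality in Lemma \ref{l3.20} makes any analog of the inclusions $\lambda$ and $\mu$ unnecessary is exactly why the paper's proof is so short.
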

\begin{proof} The path homology groups defined on the category of digraphs are homotopy 
	invariant \cite{Hyper},\cite{MiHomotopy} and thus,  the result follows from Lemma \ref{l3.20}. 
\end{proof}

\bigskip

Yu.V. Muranov: \emph{Faculty of Mathematics and Computer Science, University
of Warmia and Mazury, Sloneczna 54 Street, 10-710 Olsztyn, Poland }

e-mail: muranov@matman.uwm.edu.pl

\smallskip

Anna Szczepkowska: \emph{
Faculty of Mathematics and 
Computer Science,  University
of Warmia and Mazury, 
10-710 Olsztyn, Poland}

e-mail:  	anna.szczepkowska@matman.uwm.edu.pl
\smallskip

Vladimir Vershinin: \emph{D\'epartement des Sciences Math\'ematiques,
Universit\'e de Montpellier, Place Eug\'ene Bataillon, 34095 Montpellier
cedex 5, France} 

e-mail: vladimir.verchinine@univ-montp2.fr
\end{document}